\newtheorem{theorem}{Theorem}[subsection]
\newtheorem{corollary}[theorem]{Corollary}
\newtheorem{definition}[theorem]{Definition}
\newtheorem{lemma}[theorem]{Lemma}
\newcommand{\smcat}[1]{\ensuremath{\mathcal{#1}}} 
\newcommand{\arrow}[3]{\ensuremath{#1\colon#2\rightarrow#3}} 
\newcommand{\marrow}[3]{\ensuremath{#1\colon#2\rightarrowtail#3}}
\newcommand{\eparrow}[3]{\ensuremath{#1\colon#2\twoheadrightarrow#3}}
\newcommand{\clarrow}[2]{\ensuremath{#1\rightarrow#2}}
\newcommand{\clmarrow}[2]{\ensuremath{#1\rightarrowtail#2}}
\newcommand{\adj}[4]{\ensuremath{#1\dashv#2\colon#3\rightarrow#4}}
\newcommand{\cprod}[2]{\ensuremath{#1\times #2}} 
\newcommand{\cpull}[3]{\ensuremath{#1\times_{#2} #3}} 
\mathchardef\mhyphen="2D
\title[Large structures founded on finite order arithmetic]{The large structures of Grothendieck founded on finite order arithmetic}
\author{Colin McLarty}
\begin{document}
\maketitle

Abstract: Such large-structure tools of cohomology as toposes and derived categories stay close to arithmetic in practice, yet existing foundations for them go beyond the strong set theory ZFC\@.  We formalize the practical insight by founding the theorems of EGA and SGA, plus derived categories, at the level of finite order arithmetic.  This is the weakest possible foundation for these tools since one elementary topos of sets with infinity is already this strong.

\section{Outline}

Grothendieck's unification of geometry and number theory led him to associate large structures to small ones.  For example each single arithmetic scheme has a large category of sheaves.  The point is not to study vastly many sheaves but to prove unifying theorems on schemes such as duality theorems.  For this Grothendieck posited \emph{universes} ``large enough that the habitual operations of set theory do not go outside'' them (SGA~1 VI.1~p.~146).  Some authors avoid the large structures, at least officially, because Zermelo Fraenkel set theory with choice (ZFC) cannot prove these universes exist.  But the structures reappear in citations and as motivation.  This paper removes the objection by proving the large structure theorems at the logical level of finite order arithmetic.

Finite order arithmetic \citep[Part II]{TakeutiProof}, or simple type theory with infinity, is $n$-th order arithmetic for all finite $n$.   It deals with numbers, sets of numbers, and sets of those, up through any fixed finite level.  Sections~\ref{S:MacLane}-- \ref{S:MCcategories} develop basic cohomology in any one of several set theories equivalent to this.

Sections~\ref{S:MTT}--\ref{S:CatsMTT} give a weak notion of a universe $\mathcal{U}$, and a simpler notion of $\mathcal{U}$-category than Grothendieck's (SGA~4 I.1.2), in a theory of classes  and collections conservative over set theory.  Section~\ref {S:largestructure} proves standard theorems on toposes, derived categories, and fibered categories.   This is the weakest possible level for Grothendieck's tools since a single elementary topos of sets with infinity is already as strong as finite order arithmetic.  Section~\ref{S:Fermat} relates this to proofs of  Fermat's Last Theorem.

\section{Set theory for basic cohomology}\label{S:MacLane}

Cohomology  needs a set $\mathbb{N}$ of natural numbers, and such rudimentary constructions as a product \cprod{A}{B} and union $A\cup B$ for any two sets $A,B$.  The delicate point is power sets.  Consider the finitely iterated power sets of $\mathbb{N}$:
  \[ \mathcal{P}^0(\mathbb{N})=\mathbb{N} \quad \mbox{ and }\quad\mathcal{P}^{n+1}(\mathbb{N})
           = \text{ the power set of } \mathcal{P}^{n}(\mathbb{N}).\]
Using replacement ZFC proves there is a set $\{\mathcal{P}^n(\mathbb{N}) |  n\in\mathbb{N}\}$ of all these, and vastly larger sets beyond.  Zermelo set theory using separation instead of replacement does not prove there is a set of all $\mathcal{P}^n(\mathbb{N})$ but it does prove all exist:
   \[\forall n\in \mathbb{N} \text{ there exists }\mathcal{P}^n(\mathbb{N}).\]
The axiom systems relevant here do not even prove that.   For each specified natural number, say 12, they prove the power set $\mathcal{P}^{12}(\mathbb{N})$ exists.  But they cannot prove the statement with quantifier $\forall n\in \mathbb{N}$ \citep{MathThe}.  

The separation axiom of Zermelo set theory says each formula $\phi(x)$ defines a subset $\{x\in y\ |\ \phi(x)\}\subseteq y$ of any set $y$.  Our set theories have \emph{bounded separation}, meaning the axiom only holds for formulas $\phi(x)$ where each quantifier has a bound $\forall u\in v$ or $\exists z\in w$.  In other words $\phi(x)$ must specify a set to look in for the values of each quantified variable.

For example an $I$-indexed set $\{X_i|i\in I\}$ here cannot be defined merely by giving a set $X_i$ for each $i\in I$, since lacking replacement there might be no set $X$ containing all the $X_i$.  We represent a set of disjoint sets $\{X_i|i\in I\}$ as a function \arrow{s}{X}{I} where for each $i\in I$ the set $X_i$ is the pre-image $s^{-1}(i)\subseteq X$. 

All this can be formalized in finite order arithmetic but set theoretic language is more convenient here.  Suitable set theories include the elementary theory of the category of sets (ETCS) \citep{LawElem}, and the fragment of ZFC without replacement or foundation and with separation only for bounded formulas.  This fragment is Mathias's ZBQC minus foundation or his \textbf{Mac} minus foundation and transitive containment.  Equiconsistency of finite order arithmetic and all these named set theories follows from \citet{MathThe}.  We use MC to suggest ``Mac~Lane set theory''  as shorthand for any of these set theories.

\section{Basic cohomology in MC}\label{S:MCcategories}
The category  $Set^{\smcat{C}}$   of all set-valued functors on a small category \smcat{C} is not a set.  But each set-valued functor  can be defined as one set so that $Set^{\smcat{C}}$  is a definable class.  Sections~\ref{SS:Smcats}--\ref{SS:Yoneda} show bounded separation proves results such as the Yoneda lemma.  Sections~\ref{SS:topologies}--\ref{SS:fundgroup} discuss issues from SGA and EGA.  Section~\ref{SS:injectives} uses bounded separation to give infinite injective resolutions of sheaves of modules, where previous published proofs use at least countable replacement.  

To be precise about Theorem~\ref{T:Topos}, sheaves over a site do not form a model of the topos axioms in MC, because they do not form one set.  Rather MC proves each topos axiom, and thus each theorem of elementary topos theory, when sheaves and natural transformations over a given site are taken as objects and arrows.

Contrast Theorem~\ref{T:GRTopos}:  In the theory MTT of classes and collections, each Grothendieck topos exists as a single class and is a model of the topos axioms.

\subsection{Small categories}\label{SS:Smcats}

A \emph{small category} \smcat{C} is a set $C_0$ called the objects and a set $C_1$ called the arrows with domain and codomain functions $d_0, d_1$ and composition $m$ satisfying the category axioms.  A \emph{functor} \smcat{\arrow{F}{C}{D}} of small categories is an \emph{object part} \arrow{F_0}{C_0}{D_0} and \emph{arrow part} \arrow{F_1}{C_1}{D_1} preserving composition and identity arrows.  In fact $\mathcal{F}$ is fully determined by its arrow part $F_1$. 

For any small categories \smcat{B,C} there is a small category \smcat{B^C} of all functors \clarrow{\smcat{C}}{\smcat{B}}, with natural transformations as arrows~\citep[pp.~40--42]{CfWM}.  If functors are represented by their arrow parts then the set of all functors appears as a subset of the function set $B_1^{C_1}$.  Natural transformations are certain functions \clarrow{C_0}{B_1} from objects of \smcat{C} to arrows of \smcat{B}, so they form a subset of the function set $B_1^{C_0}$. 

\subsection{Presheaves}\label{SS:Presheaves}
A \emph{presheaf} $F$ on a small category \smcat{C} is a contravariant functor from \smcat{C} to sets.   But sets do not form a small category.  So $F$ is defined as a $C_0$-indexed set of sets \arrow{\gamma_0}{F_0}{C_0} and an action \arrow{e_F}{F_1}{F_0} where
     \[F_1 = \{\langle s,f \rangle \in \cprod{F_0}{C_1}\ |\ \gamma_0 (s)=d_1(f)  \}\]
For each $A\in C_0$ define the value $F(A)$ to be:  
     \[F(A) = \gamma_0^{-1}(A)= \{s\in F_0\ |\ \gamma_0(s) = A \}.\]
We require:
\begin{enumerate}
   \item  For all arrows \arrow{g}{B}{A} in \smcat{C}, if $s\in F(A)$ then $e_F\langle s,g \rangle\in F(B)$.
   \item   If $s\in F(A)$ then  $e_F\langle s,1_A \rangle = s$ for the identity arrow $1_A$.
   \item   For any \arrow{h}{C}{B} in \smcat{C}, $e_F\langle s, gh \rangle = e_F\langle e_F\langle s,g\rangle,h\rangle$.
\end{enumerate}
By Clause~1, \arrow{F(g)}{F(A)}{F(B)} is defined by $(F(g))(s)=e_F\langle s,g \rangle$.  Clauses~2--3 express functoriality for identity arrows and composition.  

A technical lemma will be used later:

\begin{lemma}\label{L:limitofpresheaf} Every presheaf  $F$ on a small category \smcat{C} has a limit set $\varprojlim F$ and colimit set $\varinjlim F$.
\end{lemma}
\begin{proof}The proofs of \citet[pp.~110 and 112~ex.~8]{CfWM} work in MC.  Note Mac~Lane construes elements of  $\varprojlim F$ as certain functions from $C_0$ to $F_0$.
\end{proof}

A \emph{natural transformation} \arrow{\eta}{F}{G} of presheaves is a function over $C_0$
\[\xymatrix@R=.1pc@C=1pc{ F_0  \ar[rr]^{\eta}  \ar[ddr]_{\gamma_0} && G_0    
         \ar[ddl]^{\gamma_0'}\\  &&&   \gamma_0 =  \gamma_0'\eta  \\  & C_0  }\]
which commutes with the actions $e_F$ and $e_G$ in the obvious way.

Rougly speaking, prescheaves on \smcat{C} form a locally small, complete, and cocomplete category.   That will be exactly true in the class and collection theory MTT.  But the set theory MC requires more cautious statements as follows. 

Any set of presheaves has a set of all transformations between them.  That is, an \emph{$I$-indexed set} of presheaves on a small category \smcat{C} is a \cprod{C_0}{I}-indexed set of sets \arrow{\gamma_0}{F_0}{\cprod{C_0}{I}} with an $I$-indexed action \arrow{e_F}{F_1}{F_0} where now
  \[F_1 = \{\langle s,f,i\rangle \in \cprod{F_0}{\cprod{C_1}{I}}\ |\ \gamma_0 (s)=          
               \langle d_1(f),i \rangle  \}\]
Each $A\in \smcat{C}$ and $i\in I$ determine a set $F(A,i)$.  The action must satisfy equations saying for each arrow \arrow{g}{B}{A} in \smcat{C} and index $i$ it induces a function  \arrow{F(g,i)}{F(A,i)}{F(B,i)} and is functorial.  For any  $i,j\in I$ a natural transformation \clarrow{F(\_,i)}{F(\_,j)} is a subset of \cprod{F_0}{F_0}.  So all these transformations form a subset of the powerset $\mathcal{P}(\cprod{F_0}{F_0})$, with defining conditions bounded by $F_1$. 

Given parallel natural transformations \arrow{\eta,\iota}{F}{G} of presheaves the usual constructions of a presheaf equalizer and a presehaf coequalizer work in MC~\cite[p.~115]{CfWM}. And every indexed set \arrow{\gamma_0}{F_0}{\cprod{C_0}{I}} of presheaves has a coproduct $\coprod F$ given by projection to $C_0$:
  \[\xymatrix@C=1pc{ \coprod F=F_0 \ar[rr]^<<<<<<{\coprod \gamma_0} && C_0 & = & F_0 \ar[rr]^<<<<{\gamma_0} && \cprod{C_0}{I} \ar[rr]^<<<<<{p_0} && C_0} \]
For each $A$, the value $\coprod F(A)$ is the disjoint union of all $F(A,i)$ for $i\in I$.   So the action \arrow{e_F}{F_1}{F_0} is also the action for $\coprod F$.  The usual construction of a product of a set of presheaves also works in MC, using the function set $F^I$.

\subsection{The Yoneda lemma}\label{SS:Yoneda}

Each object $B$ of a small category \smcat{C} \emph{represents} a presheaf $R_B$ assigning to each object $A$ of \smcat{C} the set 
  \[R_B(A) = \mathrm{Hom}_{\smcat{C}}(A,B)\] 
of all arrows from $A$ to $B$. Each \smcat{C} arrow \arrow{f}{A'}{A} gives a function \[R_B(f) :  \mathrm{Hom}_{\smcat{C}}(A,B) \rightarrow \mathrm{Hom}_{\smcat{C}}(A',B)\] defined by $R_B(f)(g)=gf$.   There is even a $C_0$-indexed family of all representable presheaves $R_B$, namely $C_1$ with the domain and codomain functions:
\[\xymatrix@C=4pc{ C_1 \ar[r]^<<<<<<<<{\langle d_0,d_1 \rangle} & \cprod{C_0}{C_0} } \]

Any arrow \arrow{h}{B}{D} of \smcat{C} induces a natural transformation of presheaves in the same direction, defined in the natural way:
  \[ \arrow{R_h}{R_B}{R_D} \qquad R_h(g)=hg \quad \text{for all } g\in R_B \]
This operation is functorial in that $R_hR_k=R_{hk}$ and $R_{(1_B)}=1_{(R_B)}$.

The simplest \emph{Yoneda lemma} says for any presheaf $F$ on \smcat{C} and object $B$ of \smcat{C}, natural transformations \clarrow{R_B}{F} correspond naturally to elements of $F(B)$.  \citet[p.~59]{CfWM} has a proof suitable for MC\@.  So the representables are \emph{generators}: any two distinct natural transformations of presheaves \arrow{\eta\neq \theta}{F}{G} are distinguished by some natural transformation \arrow{\nu}{R_B}{F}.
  \[ \xymatrix{ R_B \ar[r]^{\nu} & F \ar@<1ex>[r]^{\eta} \ar@<-1ex>[r]_{\theta} & G & \eta\nu\neq\theta\nu } \] 

A stronger Yoneda lemma says every presheaf is a colimit of presheaves $R_B$.   The elementary proof by \citet[p.~51]{JTop} is easily formalized in MC\@.

\subsection{Topologies}\label{SS:topologies}
A \emph{Grothendieck topology} $J$ on a small category \smcat{C} assigns each object $A$ of \smcat{C} a set of sets of arrows to $A$ called the set of \emph{covers} of $A$.  So it is a $C_0$-indexed set of sets of arrows subject to familiar conditions all bounded by $C_1$ and its powerset.  Thus there is a set of all topologies on \smcat{C}.  

A $J$-\emph{sheaf} on $\langle \smcat{C},J\rangle$ is a presheaf meeting a compatibility condition: for every $J$-covering family $\{\arrow{f_i}{A_i}{A} | i\in I\}$ the value $F(A)$ is an equalizer
  \[ \xymatrix{ F(A) \ar[r]^<<<<{\nu} &  \prod_{i} F(A_i)  \ar@<1ex>[r]^<<<<{\eta} \ar@<-1ex>[r]_<<<<{\theta} & \prod_{i, j} F(\cpull{A_i}{A}{A_j}) } \] 

The usual proofs work in MC to show every presheaf $F$ on a site $\langle \smcat{C},J\rangle$ has an \emph{associated sheaf} $\mathbf{a}F$ and natural transformation \arrow{i}{F}{\mathbf{a}F} such that every natural transformation \arrow{\eta}{F}{S} to a $J$-sheaf $S$ factors uniquely through $i$.  This universal property shows each natural transformation of presheaves \arrow{\theta}{F}{G} induces a natural transformation of the $J$-sheaves \arrow{\mathbf{a}\theta}{\mathbf{a}F}{\mathbf{a}G}.

\begin{theorem}\label{T:Topos}All theorems of elementary topos theory hold for sheaves over any site in MC.  See for example \citep{JTop}.
\end{theorem}
\begin{proof}The elementary topos axioms and proofs involve only bounded constructions on objects and arrows.  
\end{proof}

As MC does not have the replacement axiom it takes a little care to show:

\begin{lemma}\label{L:sheafbound} Any set of presheaves $\{F_i|i\in I\}$ over a site $\langle \smcat{C},J\rangle$ has a set of associated sheaves $\{\mathbf{a}F_i|i\in I\}$.  
\end{lemma} 
\begin{proof}Following \citet[p.~129]{MacMoer} it suffices to show this for the operator $F\mapsto F^+$ on presheaves  in place of $F\mapsto \mathbf{a}F$, since $\mathbf{a}F=F^{++}$.  

For each object $A$ of \smcat{C} the set $F^+(A)$ is a set of equivalence classes of compatible families of sections on covers of $A$.  In the notation of Section~\ref{SS:Presheaves}, a family of sections is a subset of $F_1$ so every $F^+(A)$ is a subset of the iterated power set $\mathcal{P}^2(F_1)$.  So $(F^+)_0$ is also a subset of $\mathcal{P}^2(F_1)$, and the structure map \arrow{\gamma}{(F^+)_0}{C_0} has graph a subset of  \cprod{\mathcal{P}^2(F_1)}{C_0}.  The quantifiers defining this subset are bounded by $C_0$, $C_1$, and $F_0$.  Analogous treatment works for the action on $F^+$.  

Now suppose given a set $\{F_i|i\in I\}$.  This is technically a \cprod{C_0}{I}-indexed set of sets \arrow{\gamma_0}{F_0}{\cprod{C_0}{I}} with  $I$-indexed action \arrow{e_F}{F_1}{F_0} on the set
  \[F_1 = \{\langle s,f,i\rangle \in \cprod{F_0}{\cprod{C_1}{I}}\ |\ \gamma_0 (s)=          
               \langle d_1(f),i \rangle  \}\]
The set $\{\mathbf{a}F_i|i\in I\}$ is formed in the single  iterated power set  $\mathcal{P}^2(F_1)$ for this $F_1$.  
\end{proof}

\subsection{Size of sites}\label{SSTechnicalsites} Most textbooks and published proofs make number theoretic sites proper classes.  The issue is not \emph{gros} versus \emph{petit} sites.  Those do not differ in set theoretic size but in the geometric ``size'' of fibers.   Fibers may have any dimension in a gros site but are 0-dimensional in a petit site.  The issue is that replacing proper class sites by sets is not trivial.  The comparison lemma, SGA~4 VII.3.3, our Theorem~\ref{Th:Compare}, works for some cases.   Verdier SGA~4 III.0 notes the use of this lemma ``obliges us to certain contortions.''

As another strategy, publications often use scheme sites local on the fiber so the site is closed under all set-sized disjoint unions \clarrow{\coprod_iY_i}{X} and it cannot be a set itself.  Often the maps could be limited to the \emph{quasi-compact} so only finite unions arise.  See EGA~I~6.3.1 or \citet[p.~90]{TamEt}.   

In any case Sections~\ref{S:MTT}---\ref{S:largestructure} handle arbitrary large sites in a theory of classes and  collections no stronger logically than the set theories we call MC.

\subsection{Functoriality of presheaves and sheaves}
 Grothendieck and Verdier SGA~4 I.5, and Verdier SGA~4 III.1--3 prove various relations between small site functors, and presheaf and sheaf functors.  All are all provable in MC\@.  Suitable bounds are implicit in SGA, and many are explicit in \citet[Ch.~2]{JTop}.

The basic case is composing a presheaf $F$ on \smcat{D} with a functor \arrow{u}{\smcat{C}}{\smcat{D}} to get a presheaf $u^*F$ on \smcat{C}.  If $F$ has object part $F_0=\{F_A\ |\ A\in \smcat{D}_0\}$ then $(u^*F)_0=\{F_{u(B)}\ |\ B\in \smcat{C}_0\}$.   The action of the arrows $C_1$ on $u^*F$ is the action of $D_1$ on $F$ composed with the arrow part \arrow{U_1}{C_1}{D_1} of $u$.

A natural transformation \arrow{\eta}{F}{G} of presheaves on \smcat{D} is a $D_0$ indexed set of functions, and its image  \arrow{u^*\eta}{u^*F}{u^*G} is the same only re-indexed over $C_0$.  Formally it is a pullback: 
     \[\xymatrix@R=.1pc@C=1pc{  &&  (u^*G)_0  \ar[rrr]  \ar[ddddddl]|<<<<<<<<\hole   &&& G_0    
                                         \ar[ddddddl]^{\gamma_0'} \\  \\
                      (u^*F)_0  \ar[rruu]^{u^*\eta} \ar[rrr]   \ar[ddddr]  &&& F_0  \ar[rruu]^<<<<<{\eta}  \ar[ddddr]_{\gamma_0}  \\   
                         \\  \\  \\  &  C_0 \ar[rrr]^{u_0} &&& D_0  }\]

The set theory MC makes $u^*$ a well defined functorial operation on presheaves, but not actually a functor, because MC has no actual category of all presheaves over a site.  MC does prove $u^*$ has well defined left and right adjoint functorial operations defined by lightly adapting either SGA~4 I.5 and~4 III.1--3, or the relevant parts of  \citet{JTop}.  And it verifies the corresponding functors and adjunctions for sheaves on sites $\langle \smcat{C},J\rangle$.  We spell out a case used in Section~\ref{SS:oversites}:

\begin{lemma}\label{L:setstosets}For any functor of small categories \arrow{u}{\smcat{C}}{\smcat{C}'} the presheaf operator $u_*$ right adjoint to $u^*$ takes each set $\{F_i|i\in I\}$ of presheaves on \smcat{C} to a set of presheaves $\{u_*(F_i)| i\in I\}$ on \smcat{C'}.
\end{lemma}
\begin{proof}  For each object $Y$ of \smcat{C'}, Grothendieck and Verdier (SGA~4 I.5) form a small category $\mathcal{I}^u_{Y}$, now more often written $(u\downarrow Y)$ the comma category of $u$ over $Y$.  Objects of $(u\downarrow Y)$ are pairs $\langle X,m\rangle$ with  \arrow{m}{u(X)}{Y}, and arrows \arrow{\xi}{\langle X,m\rangle}{\langle X',m'\rangle} are defined by commutative triangles 
\[\xymatrix@R=.5pc@C=1pc{   X \ar[dd]_{\xi} &  u(X) \ar[drrr]^m   \ar[dd]_{u(\xi)} \\       &&&& Y. \\   x' & u(X') \ar[urrr]_{m'}}\]
A projection functor \arrow{pr_Y}{(u\downarrow Y)}{\smcat{C}} takes $\langle X,m\rangle$ to $X$.  

Composing $pr_Y$ with any presheaf $F$ on \smcat{C} gives a presheaf $pr_Y^*F$.  Define $(u_*F)(Y)$ as $\varprojlim pr_Y^*F$ (Lemma~\ref{L:limitofpresheaf}).  Every \arrow{f}{Y}{Y'} in \smcat{C'} induces a functor $(u\downarrow f)$ from $(u\downarrow Y)$ to $(u\downarrow Y')$, and so a function \arrow{(u_*F)(f)}{(u_*F)(Y')}{(u_*F)(Y)}.  

Each set $u_*F(Y)$ can be taken as a subset of the function set $F_0^{\cprod{C_0}{C'_1}}$.  So the graph of the structure map \arrow{\gamma}{(u_*F)_0}{C'_0} is a subset of \cprod{F_0^{\cprod{C_0}{C'_1}}}{C'_0}.  The quantifiers defining this subset are bounded by $C_0$, $C_1$, $C'_0$, $C'_1$, and $F_0$.  Analogous treatment works for the action, so $u_*F$ is a presheaf on \smcat{C'}.

Now suppose given a set  $\{F_i|i\in I\}$, that is an indexed set  \arrow{\gamma_0}{F_0}{\cprod{C_0}{I}} with  $I$-indexed action.  Each $u_*(F_i)$ can be constructed this way, but all working in the single set \cprod{F_0^{\cprod{C_0}{C'_1}}}{C'_0} for this set $F_0$, so as to define a single set  $\{u_*(F_i)| i\in I\}$.
\end{proof}

\subsection{\'Etale fundamental groups}\label{SS:fundgroup}
A topological space $X$ has \emph{covering spaces} as e.g.~a helix covers a circle.  Symmetries of a suitable cover of $X$ form its (topological) fundamental group, like a Galois group, and reveal much about $X$. The \emph{finite \'etale covers} of a scheme $X$ and the corresponding \'etale fundamental group give uncannily good analogues to topological covering spaces and include Galois groups as special cases~\citep{SGA1}.

The theory of finite \'etale covers is elementary algebra (EGA~IV).  The fundamental group uses a category of ``all'' finite \'etale covers of a scheme $X$, but that means all \emph{up to isomorphism}.  Since these covers are given by finitely generated extensions of coordinate rings on $X$,  MC provides a sufficient category using the set of all extensions generated by finite subsets of some fixed infinite set $G$.

\subsection{Injectives and cohomology groups}\label{SS:injectives}

\citet{BaerAb} used replacement to prove every module embeds in an injective module.  \cite{EckmannSchopf} proved it without replacement, but requiring choice to show divisible Abelian groups are injective~\citep{BlassInj}.  \citet{Tohoku} adapted Baer's proof to sheaves of modules on topological spaces in a way that actually works in any Grothendieck topos.   Because it uses choice the Eckmann-Schopf  proof does not  lift directly to the topological case let alone all Grothendieck toposes. \citet{BarrPoints} overcame this by showing every Grothendieck topos $E$ has \emph{Barr covers} satisfying choice.  

MC suffices to formalize that proof, for sheaves of modules over any site, not only for single injective embeddings but for infinite injective resolutions. 

\subsubsection{Resolutions in sets}\label{SS:resolutionssets}
Standard proofs work in MC to show every Abelian group embeds in a divisible one, and every divisible Abelian group is injective.  That is all there is to know about injective resolution of Abelian groups, since quotients of divisible groups are divisible, so every embedding of an Abelian group $A$ into a divisible $I_0$ gives a length one injective resolution:
                \[ \xymatrix{A \ar@{ >->}[r]  &   I_0 \ar[r]   &  I_0/A  \ar[r]  &  0 }.  \]

Injective modules over a ring are more subtle, and can require infinite resolutions.  We use a result which serves again in Section~\ref{SS:oversites}, first given by \citet[p.~108]{MarandaInj}  and Verdier \citep[\S V  lemma 0.2]{SGA4mimeo}:  

\begin{lemma}\label{L:injectives}If a functor \smcat{\arrow{F}{B}{A}} has a left exact left adjoint \smcat{\arrow{G}{A}{B}} with monic unit and each object in \smcat{B} embeds in an injective then so does each in \smcat{A}.
\end{lemma}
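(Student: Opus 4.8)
The plan is to construct, for an arbitrary object of $\smcat{A}$, a monomorphism into an injective object of $\smcat{A}$, producing the required injectives as the images under $F$ of injectives in $\smcat{B}$. First I would record the two preservation facts that drive everything. Because $G$ is a left adjoint, $F$ is a right adjoint and hence preserves all limits; in particular $F$ carries monomorphisms to monomorphisms. Because $G$ is left exact, it preserves finite limits and so also preserves monomorphisms. These are the only uses I anticipate for the two hypotheses on $G$, apart from the monic unit at the very end.

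The heart of the argument is to show that $FI$ is injective in $\smcat{A}$ whenever $I$ is injective in $\smcat{B}$. Given a monomorphism $m\colon x\rightarrowtail y$ in $\smcat{A}$ and an arbitrary $f\colon x\to FI$, I would transpose $f$ across $G\dashv F$ to a map $\hat f\colon Gx\to I$. Since $G$ preserves monos, $Gm\colon Gx\rightarrowtail Gy$ is monic, so injectivity of $I$ in $\smcat{B}$ supplies an extension $\hat g\colon Gy\to I$ with $\hat g\circ Gm=\hat f$. Transposing $\hat g$ back yields $g\colon y\to FI$, and the extension property $g\circ m=f$ then follows purely from naturality of the adjunction bijection. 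Thus every map into $FI$ from a subobject extends, which is injectivity of $FI$.

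To finish I would fix any object $a$ of $\smcat{A}$ and, using that $\smcat{B}$ has enough injectives, choose a monomorphism $Ga\rightarrowtail I$ into an injective $I$. Applying $F$ gives a monomorphism $FGa\rightarrowtail FI$, and precomposing with the unit component $\eta_a\colon a\to FGa$ gives a map $a\to FI$. Since $\eta_a$ is monic by hypothesis and a composite of monomorphisms is monic, this exhibits $a$ as a subobject of the injective object $FI$. Hence $\smcat{A}$ has enough injectives.

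The one step that genuinely needs care is the equation $g\circ m=f$ in the middle paragraph. It is exactly the assertion that the transpose of $g\circ m$ equals the precomposite $\hat g\circ Gm$ of the transpose of $g$, i.e.\ naturality of the bijection $\smcat{A}(x,FI)\cong\smcat{B}(Gx,I)$ in the contravariant variable $x$; from $\hat g\circ Gm=\hat f$ and injectivity of the bijection one reads off $g\circ m=f$. Everything else is formal, and notably the left exactness of $G$ enters only through preservation of monomorphisms, while the hypothesis that the unit is monic is used solely to embed $a$ itself rather than merely $Ga$.
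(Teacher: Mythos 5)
Your proof is correct and takes essentially the same route as the paper's: its two terse observations --- that a monic unit makes the adjunct of any monic $G(a)\rightarrowtail I$ monic (your composite $F(m)\circ\eta_a$ is exactly that adjunct), and that $G$ preserving monics makes $F$ preserve injectives --- are precisely the two facts you verify in detail before composing them in the same way. There are no gaps; you have simply written out the transposition and mono-preservation arguments that the paper leaves implicit.
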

\begin{proof} If units are monic, every monic \clmarrow{\smcat{G}(A)}{B} has monic adjunct \clmarrow{A}{\smcat{F}(B)}.  Since \smcat{G} preserves monics, \smcat{F} preserves injectives.  If object $A$ in \smcat{A} has a monic \clmarrow{\smcat{G}(A)}{I} to an injective in \smcat{B}, the adjunct \clmarrow{A}{\smcat{F}(I)} is monic.
\end {proof}

\begin{corollary}\label{C:moduleinjectives}For any ring $R$, every $R$-module embeds in an injective.
\end{corollary}
\begin{proof} Let \smcat{F} take each Abelian group $A$ to the $R$-module $Hom_{\mathbb{Z}}(R,A)$ of additive functions from $R$ to $A$, with $r\cdot f$ defined by $(r\cdot f) (x) = f(r\cdot x)$.  It has left exact left adjoint \smcat{G} the underlying group functor.  For each $M$ the unit $\eta_M$ takes each $m\in M$ to the function $r\mapsto r\cdot m$, so is monic.
\end{proof}

Lemma~\ref{L:injectives} and Corollary~\ref{C:moduleinjectives} give a procedure to produce injective resolutions of any finite length $n$ for any module $M$.  That is exact sequences 
   \[ \xymatrix{M \ar@{ >->}[r]  &   I_0 \ar[r]  & \cdots \ar[r] &  I_n  }  \] 
with all $I_i$ injective.  Define sequences $I_i$ and $M_i$ inductively:
\begin{enumerate}
  \item Set $M_0=M$.
  \item\label{I:divisible} Embed $M_i$ as an additive group into a divisible group \clmarrow{M_i}{M_{di}}.
  \item Form the injective $R$-module $I_i=Hom_{\mathbb{Z}}(R,M_{di})$ with monic \clmarrow{M_i}{I_i}.
  \item Start again, with the quotient $M_{i+1}=I_i/M_i$.
\end{enumerate} 

Textbooks immediately conclude there are infinite injective resolutions, by implicit use of (countable) replacement.  In fact MC also proves that conclusion, but only after altering the procedure to bound it inside one set for each module $M$.

The bound will be the function set $M^{\cprod{\mathbb{Z}}{R^{\mathbb{N}}}}$ which has an $R$-module structure induced by $M$.  Here $R^{\mathbb{N}}$ is the set of infinite sequences in $R$. Say a function \arrow{f}{\cprod{\mathbb{Z}}{R^{\mathbb{N}}}}{M} is \emph{cut off at} $n\in\mathbb{N}$  if $f(m,\sigma)=0$ for every sequence $\sigma$ which does \emph{not} have $\sigma(i)=0$ for all $i\geq n$.  In effect a function cut off at $n$ is an element of $M^{\cprod{\mathbb{Z}}{R^n}}$.  So, a function cut off at $n+1$ can also be regarded as a function from $R$ to the set $M^{\cprod{\mathbb{Z}}{R^n}}$ of functions cut off at $n$.

Also,  notice Step~\ref{I:divisible} is idle for $i\geq 1$ since all $I_i$ and all $M_{i+1}=I_{i+1}/I_i$ are divisible groups.  So it suffices to give an infinite injective resolution for each module $M$ with divisible underlying group.  For this case $M_i=M_{di}$ for all $i\in\mathbb{N}$. 

For any ring $R$, and $R$-module $M$ with divisible underlying group, define this induction parallel to the one above:

\begin{enumerate}
  \item[(1\textprime)] Let the subset $N_0\subset M^{R^{\mathbb{N}}}$ contain just the additive functions cut off at $0$. In effect these are additive functions       
                    \clarrow{\mathbb{Z}}{M}, so $N_0\cong M$.
   \item[(1\textprime\textprime)] Define equivalence relation $E_0$ as the identity on $N_0$.  The point is \[M\cong N_0\cong N_0/E_0.\]
  \item[(3\textprime)] Given the subset $N_i\subset M^{R^{\mathbb{N}}}$ with every function cut off at $i$, and equivalence relation $E_i$ on it, define a certain subset $J_i\subset M^{R^{\mathbb{N}}}$ of functions which are cut off at $i+1$.  Namely, think of these as functions \clarrow{R}{M^{\cprod{\mathbb{Z}}{R^n}}}.  Let $J{i}$ contain just those whose values all lie in $N_i$ and which are additive when seen as functions \clarrow{R}{N_i/E_i}.   Let $Q_I$ be the pointwise equivalence relation making functions  \clarrow{R}{N_i} equivalent iff they are equal as functions  \clarrow{R}{N_i/E_i}.
   \item[(3\textprime\textprime)] There is a natural monic \marrow{h}{N_i}{J_i} where for each $g\in N_i$ the value $h(g)$ is the unique $R$-linear function  \clarrow{R}{N_i/E_i} taking $1\in R$ to $g$.  
     \item[(4\textprime)] Define $N_{i+1}=J_i$ with $E_{i+1}$ the smallest equivalence relation containing both $Q_i$ and the relation induced by the submodule   \marrow{h}{N_i}{J_i}.
\end{enumerate}

For every $i\in \mathbb{N}$ the quotient $N_i/E_i$ is isomorphic as $R$-module to the module $M_i$ above, while each $J_i/Q_i$ is isomorphic to $I_i$ above,  So this gives an isomorphic copy of the resolution by $I_i$ above.  Bounded separation suffices to show this infinite resolution is one set, since it is all bounded by$M^{R^{\mathbb{N}}}$.

\subsubsection{Resolutions over sites}\label{SS:oversites}

Now let $E$ be any site, and $R$ any sheaf of rings on it.  We want to show sheaves of modules on $R$ have infinite injective resolutions.  The argument of Section~\ref{SS:resolutionssets} works in any elementary topos with natural numbers and choice, so it works for sheaves over any site whose sheaves satisfy choice in the obvious way: every sheaf epimorphism has a right inverse.  So it works over any Barr cover of $E$~\citep[p.~261]{JTop}.  Compare \cite{VanOsdol}.   We have only to show in MC that every site has a Barr covering site and each infinite resolution descends (as a single set) along that Barr cover.  The first is clear from the construction by \citet[p.~511--13]{MacMoer}.

\begin{corollary}\label{C:moduleinjectivessites} For any surjection of ringed toposes \adj{f^*}{f_*}{(\smcat{B},R')}{(\smcat{A},R)} if every $R'$ module embeds in an injective then so does every $R$ module, and $f_*$ preserves injectives.
\end{corollary}
\begin{proof} Lemma~\ref{L:injectives}, noting topos surjections have monic unit.
\end{proof}

\begin{lemma}\label{L:directpreserves} For any geometric morphism \adj{f^*}{f_*}{\smcat{B}}{\smcat{A}} where \smcat{B} satisfies axiom of choice, $f_*$ preserves all exact sequences of modules over any ring. 
\end{lemma}
\begin{proof} Direct image functors preserve monics.  In the choice topos \smcat{B} every quotient \eparrow{q}{M}{M/J} has a right inverse function \clarrow{M/J}{M}  (generally not a homomorphism), so $f_*(q)$ also does, so $f_*(q)$ is an epimorphism and thus a quotient.  
\end{proof}

\begin{theorem} For any sheaf of rings $R$ on any site $\langle \smcat{C},J\rangle$, every sheaf of $R$-modules $M$ has an infinite injective sheaf resolution.
\end{theorem} 
\begin{proof} Over any Barr cover of  $\langle \smcat{C},J\rangle$, $f^*(M)$ has an infinite injective resolution existing as a single set
  \[ \xymatrix{f^*(M) \ar@{ >->}[r]  &   I_0 \ar[r]  & \cdots \ar[r] &  I_n  \ar[r] & \cdots }  \]
By Lemma~\ref{L:directpreserves} its $f_*$ image is exact and since the unit \clmarrow{M}{f_*f^*(M)} is monic this is an injective resolution:
  \[ \xymatrix{M  \ar@{ >->}[r]  &  f_*(I_0) \ar[r]  & \cdots \ar[r] &  f_*(I_n)  \ar[r] & \cdots }  \]
By  Lemmas~\ref{L:sheafbound}  and~\ref{L:setstosets} this resolution exists as a single set. 
\end{proof}

\subsubsection{Cohomology groups} So MC proves every sheaf of modules has an infinite injective resolution, existing as one set.  Indeed it can define a specific resolution for any module over a given site.  The axiom of choice in MC is used to verify the construction works, specifically by showing divisible groups and Barr covers have the requisite properties,  but choice is not used to specify the resolution.  The usual formalities of homological algebra show cohomology groups are functorial, exact, and effaceable.  So MC can specify a long exact cohomology sequence for each short exact sequence of sheaves of modules.   Standard results on \v{C}ech cohomology and spectral sequences also follow.

\subsubsection{Resolutions at the level of 3rd order arithmetic}\label{S:Third}
Part of the above works in a the much weaker level of 3rd order arithmetic.   ZFC[0] is ZFC without powerset, while ZFC[1] extends that by positing the natural numbers have a powerset.  ZFC[0] has the proof theoretic strength of second order arithmetic, and ZFC[1] of third order.  See \cite{ZbierskiHigher}.

ZFC[0] proves sets have cartesian products \cprod{A}{B}, every $A$ has a set $\mathrm{Fin}(A)$ of all finite subsets, and every equivalence relation on a set has a set of equivalence classes.  So every set generates a free Abelian group, and tensor products exist.  ZFC[1] proves all countable set have power sets, so countable $A,B$ have a set of all functions \clarrow{A}{B}.  And these set theories have the axiom of replacement.

\begin{corollary} Provably in \emph{ZFC[1]}: every countable module $M$ on a countable ring $R$ has an infinite injective resolution. 
\end{corollary}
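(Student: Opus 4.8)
The plan is to rerun the construction of Theorem~\ref{T:resolutions} verbatim, replacing each use of the powerset axiom (licensed in MC) by the countable function-set instances that ZFC[1] supplies, and exploiting that ZFC[1]---being ZFC with powerset weakened rather than deleted---retains full separation and replacement. The one structural change is forced by the main obstacle: the MC proof bounds the whole construction inside a single powerset, the partial functions \clpararrow{R^{\mathbb{N}}}{M_d}, and that ambient set is uncountable, so ZFC[1] cannot form its powerset. There is no countable substitute, since even each stage $S_n$ is uncountable once $R$ is infinite. I therefore build the stages one at a time and gather them by replacement, never forming an uncountable powerset.

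First I would confirm the base data remain countable. ZFC[0] already provides the free Abelian group $F$ on the elements of $M$ and the tensor product $M_d=F\otimes_{\mathbb{Z}}\mathbb{Q}$; for $M$ countable, $F$ is the countable union over $\mathrm{Fin}(M)$ of finite-rank groups and so is countable, whence $M_d$ is countable. Thus the two sets underlying the entire construction, $R$ and $M_d$, are both countable. By the proof of Theorem~\ref{T:resolutions}, the stage $S_n$ is, up to isomorphism, the set of all functions from $R^{n+1}$ to $M_d$. Each $R^{n+1}$ is a finite product of countable sets, hence countable, so $S_n=M_d^{R^{n+1}}$ is a countable-to-countable function set and exists by the function-set axiom of ZFC[1]; replacement then collects the family $(S_n)_{n\in\mathbb{N}}$ into a set.

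Within each $S_n$ I would carve out $J_n$ and the equivalence relation $E_n\subseteq\cprod{S_n}{S_n}$ by full separation, form $I_n=J_n/E_n$ using the ZFC[0] quotient, and define the module operations and the connecting maps \clarrow{I_n}{I_{n+1}} as subsets of the relevant finite products, again by separation---so the ``few powersets'' of the MC proof are replaced throughout by products-plus-separation. The step from $n$ to $n+1$ uses only the definable restriction operation carrying $f\in S_{n+1}$ and $r\in R$ to $x\mapsto f(r,x)$ in $S_n$; the condition that $f$ send $R$ into $J_n$ and be additive modulo $E_n$ then cuts out $J_{n+1}\subseteq S_{n+1}$ by separation. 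This never calls for a function set out of the (generally uncountable) $I_n$, precisely because $S_{n+1}=M_d^{R^{n+2}}$ depends only on $R$ and $M_d$. Finally each $I_n$ has the form $\mathrm{Hom}_{\mathbb{Z}}(R,-)$ of a divisible group, so Lemma~\ref{L:injectives} gives injectivity, and the embeddings and quotients assemble into the infinite resolution exactly as in Theorem~\ref{T:resolutions}, all within the products, separation, replacement, and countable function sets afforded by ZFC[1].
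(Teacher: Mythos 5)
Your proof is correct, and it assembles the resolution by a route genuinely different from the paper's. The paper keeps the one-bounded-ambient-set structure of Theorem~\ref{T:resolutions} and shrinks the parameter: replacing $R^{\mathbb{N}}$ by the countable set $R^n$ makes the relevant set of partial functions (in effect a subset of $\mathcal{P}(\cprod{R^n}{M_d})$, a powerset of a \emph{countable} set) available in ZFC[1], so for each $n$ the theorem's proof runs essentially verbatim and yields a self-contained finite resolution $M\rightarrowtail I_0\rightarrow\cdots\rightarrow I_n$; the infinite resolution is then spliced together from these finite ones, using the canonical isomorphism of each $n$-length resolution onto the initial segment of the $(n+1)$-length one to define the connecting maps, with replacement collecting the whole diagram. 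You instead discard the single bound altogether: each stage $S_n=M_d^{R^{n+1}}$ exists outright in ZFC[1] as a function set between countable sets, so you can run the recursion of Theorem~\ref{T:resolutions} directly along $\mathbb{N}$, carving $J_n$ and $E_n$ out of the pre-existing $S_n$ by separation and invoking replacement both for the recursion and for the final collection. What the paper's route buys is economy: each finite resolution is literally an instance of the already-proved theorem, and only the splicing is new (though its coherence is treated briskly there). What your route buys is that no gluing or compatibility checking is needed at all: there is one construction rather than a family of constructions to be identified, because everything the recursion produces lives inside sets ZFC[1] has already supplied. Both routes rest on the same two pillars --- powersets of countable sets and replacement along $\omega$ --- and both correctly locate the obstruction, namely that the MC bound (the set of all partial functions on the uncountable set $R^{\mathbb{N}}$) is a powerset ZFC[1] cannot form. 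One small slip: $M_d$ is the quotient of $F\otimes_{\mathbb{Z}}\mathbb{Q}$ arising from the free presentation, not $F\otimes_{\mathbb{Z}}\mathbb{Q}$ itself; countability survives the quotient, which is all you use.
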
 
\begin{proof} For any $n\in\mathbb{N}$, the countable product $R^n$ exists provably in ZFC[1].  In the proof of Section~\ref{SS:resolutionssets} replace $R^{\mathbb{N}}$ by $R^n$ to construct a finite resolution.
  \[ \xymatrix{M \ar@{ >->}[r]  &   I_0 \ar[r]  & \cdots \ar[r] &  I_n  }  \]  
By construction this is isomorphic to the initial segment of the longer sequence using $R^{n+1}$.  For each $n\in \mathbb{N}$ specify $I_n$ by the $n$-length resolution,  and define \clmarrow{I_n}{I_{n+1}} using the canonical isomorphism of this $I_n$ to the $n$-th term of the $(n+1)$-length resolution.  By replacement, this provides an infinite resolution.  
\end{proof}

So ZFC[1] provides all the standard long exact cohomology sequences for ordinary countable modules.  This is the core of cohomological number theory.  But existing proofs use cohomology beyond this core, and use  more techniques than cohomology.  Formalizing them in low order arithmetic will take further analysis.

\section{Classes of sets and collections of classes} \label{S:MTT} 

Take the sets as one type and add classes of sets as a higher type and collections of classes as another.  We indicate sets by italics $x,A$, classes by calligraphic $\mathcal{A,B}$, and collections by fraktur $\mathfrak{A,B}$.  As above, $x\in B$ or $A\in B$ say a set $x$ or $A$ is in set $B$.  Use $A\in_1 \mathcal{A}$ to say set $A$ is in class $\mathcal{A}$, and $\mathcal{A}\in_2 \mathfrak{B}$ to say class $\mathcal{A}$ is in collection $\mathfrak{B}$.  Any of the set theories we abbreviate as MC, plus the higher type axioms and inference rules below, give \emph{Mac~Lane Type Theory} (MTT).  

Gentzen-style cut elimination shows MTT is conservative over the set theory.  By using only set theoretic abstracts we avoid the more complicated cut elimination for full simple type theory.  See Takeuti's analogous proofs for Peano arithmetic in place of set theory   \citeyearpar[pp.~77f.]{TakeutiConserv}\@ and for set theory \citeyearpar[p.~176]{TakeutiProof}.

A \emph{set theoretic formula} is a formula which may include variables over classes and collections but has quantifiers only over sets. So class inclusion is set theoretic:
\[\mathcal{A}\subseteq_1 \mathcal{B}\ \leftrightarrow\ \forall x\, (x\in_1\mathcal{A} \rightarrow x\in_1 \mathcal{B})\]
Inclusion of collections is well defined, expressed by a formula of MTT
\[\mathfrak{A}\subseteq_2 \mathfrak{B}\ \leftrightarrow\ \forall \mathcal{X}\,(\mathcal{X}\in_2\mathfrak{A} \rightarrow \mathcal{X}\in_2 \mathfrak{B})\]
Collection inclusion can be used in theorems and proofs of MTT\@.  It \emph{cannot} be used to define classes or collections.  It is not set theoretic.  It quantifies over classes.

The key device is \emph{set theoretic abstracts} where set theoretic formulas define classes and collections.  For example, since an ordered pair of sets is a set, a set theoretic abstract defines the cartesian product of classes \cprod{\mathcal{A}}{\mathcal{B}}:

      \[  \cprod{\mathcal{A}}{\mathcal{B}}= \{\langle x,y\rangle\ |\ x\in_1\mathcal{A}\ \&\ y\in_1 \mathcal{B}\ \}\]
And a set theoretic abstract defines the collection  of all functions \arrow{\mathcal{F}}{\mathcal{A}}{\mathcal{B}}:

\[  \mathcal{B^A}= \{\mathcal{F}\ |\ 
   \mathcal{F}\subseteq_1 \cprod{\mathcal{A}}{\mathcal{B}}\   \&\  (\forall x\in_1\mathcal{A})(\exists! y\in_1\mathcal{B})\ \langle x,y\rangle \in_1 \mathcal{F}   \}\]

Another example is the abstract for the class of all small categories:
 \[\{\langle C_0,C_1,d_0,d_1,m\rangle|\ Cat( C_0,C_1,,d_0,d_1,m)\}\]
 Here $Cat$ is a formula saying $d_0,d_1$ are functions between the sets \clarrow{C_1}{C_0} and $m$ is a partially defined function \clarrow{\cprod{C_1}{C_1}}{C_1} fulfilling the category axioms.

A 5-tuple of sets $\langle C_0,C_1,d_0,d_1,m\rangle$ is a set.  But we also want a collection of all class-sized categories while a 5-tuple of classes is not naturally a class.  So we take $n$-tuples of classes as primitive.  There is an abstract  
\[\{\langle \mathcal{C}_0,\mathcal{C}_1,\mathcal{D}_0,\mathcal{D}_1, \mathcal{M}\rangle|\ Cat(\mathcal{C}_0,\mathcal{C}_1,\mathcal{D}_0,\mathcal{D}_1, \mathcal{M}))\}\]
saying the classes  $\mathcal{C}_0,\mathcal{C}_1,\mathcal{D}_0,\mathcal{D}_1, \mathcal{M}$ fulfill the category axioms.  It indicates the 5-tuple collection of all class categories, and it is set theoretic since it only quantifies over elements of the classes involved.

We adapt rules from  \citet[p.~77--80]{TakeutiConserv}.  Our basic types are $Set,\ Class$ (of sets), and $Collection$ (of classes).  For any types $\tau_1,\tau_2$ there is a product type \cprod{\tau_1}{\tau_2}.  Abstracts are as defined here:

\begin{enumerate}

\item[6.] If $\Psi(v_1,\dots,v_n)$ is a set theoretic formula with variables $v_1,\dots,v_n$  of types $\tau_1,\dots, \tau_n$ then 
   \[\{\langle v_1,\dots,v_n\rangle|\Psi(v_1,\dots,v_n)\}\]
is an abstract of type \cprod{\tau_1}{\cprod{\dots}{\tau_n}}.  The indicated variables need not actually occur in $\Psi$, and other free variables of any type may occur.

\item[$6'$.]  Given abstracts $A_1,\dots,A_n$ of types $\tau_1,\dots, \tau_n$ respectively, and abstract $\{\langle v_1,\dots,v_n\rangle| \Psi(v_1,\dots,v_n)\}$  with variables of the same types, the expression $\langle A_1,\dots,A_n \rangle \in\{\langle v_1,\dots,v_n\rangle| \Psi(v_1,\dots,v_n)\}$ is equivalent to $\Psi( A_1,\dots,A_n)$.

\item[7.] If $\alpha$ is a free variable of type  \cprod{\tau_1}{\cprod{\dots}{\tau_n}} and $A_1,\dots,A_n$ are abstracts of types $\tau_1,\dots, \tau_n$ then $\langle A_1,\dots,A_n \rangle \in \alpha$ is a formula.
\end{enumerate}

As quantifier rules:
   \[ \forall \alpha \Psi(\alpha) \text{ implies } \Psi(A)\]
for any formula $\Psi(\alpha)$ and abstract $A$ of the same type as variable $\alpha$.  And given any proof of $\Psi(\alpha)$, with variable $\alpha$ not in any assumption, conclude $\forall \alpha\Psi(\alpha)$.  Define $\exists \alpha \Psi(\alpha)$ as $\neg \forall \alpha \neg \Psi(\alpha)$.  This gives class and collection comprehension:  For each set theoretic formula $\Psi(v)$ with set or class variable $v$, the equivalence
  \[\Psi(v) \leftrightarrow v\in_i \{v|\Psi(v)\} \text{\quad implies \quad } 
	\exists \psi \forall v (\Psi(v) \leftrightarrow v\in_i \psi) \quad \text{for\ } i=1,2\]

The identity axiom connects classes to sets:  
   \[ \forall \mathcal{A} \forall x \forall y\, (\, ( x=y\ \&\ x\in_1 \mathcal{A})
						 \rightarrow y\in_1 \mathcal{A}\, )\]
A class $\mathcal{A}$ might be a set in the sense of having the same elements as some set
  \[  \exists A \forall x (x\in A \leftrightarrow x\in_1 \mathcal{A})\]
We express this set theoretic formula informally by saying $\mathcal{A}$ is small or is a set.  The set $A$ is uniquely determined and we can work with $\mathcal{A}$ by working with $A$,

We use no identity relation for classes or collections.  This follows \citet{TakeutiConserv} where the absence of higher-type identity facilitates the conservative extension proof.   And it in practice large-structure categories like toposes are generally compared in terms of definable equivalence rather than identity. 
 
\section{Category theory in MTT}\label{S:CatsMTT}
A class category is a 5-tuple of classes $\langle \mathcal{C}_0,\mathcal{C}_1,\mathcal{D}_0,\mathcal{D}_1,\mathcal{M}\rangle$ satisfying the axioms.  Elements of $\mathcal{C}_0$ and $\mathcal{C}_1$ are sets so the axioms are set theoretic and there is a collection category  $\mathfrak{CAT}$ of all class categories.  The abstract in Section~\ref{S:MTT} gives the collection  $\mathfrak{CAT}_0$ of all class categories.  Similar ones work for all functors, and so on.

Take the class $\mathcal{U}$ of all sets as \emph{universe}.  With the class of all functions between sets it provides a class category \smcat{SET}.  This $\mathcal{SET}$ is a $\mathcal{U}$-category by this definition:.

\begin{definition}A $\mathcal{U}$-category, also called a \emph{locally small category} is a category with a class of objects and and a class of arrows such that every set of objects has a set of all arrows between them.\footnote{\citet[p.~5]{GroPrefaisceaux} reject this definition because presheaf categories should be $\mathcal{U}$-categories while their definition at the time made presheaves too big to be in $\mathcal{U}$.  Our Section~\ref{SS:Presheaves} uses the later \emph{Grothendieck construction} so presheaves are indexed sets.} 
\end{definition}

There is a  $\mathcal{U}$-category $\mathcal{CAT}$ of all small categories using the abstract in Section~\ref{S:MTT} for the class $\mathcal{CAT}_0$ of all small categories. Analoguous abstracts give the class $\mathcal{CAT}_1$  of all small functors and the class graphs of the domain, codomain, and composition functions.  Section~\ref{SS:Smcats} shows it is locally small.

\subsection{Sheaf and presheaf toposes}

Section~\ref {SS:Presheaves} proved the category of presheaves on a small category \smcat{C} is locally small.  Call that category of presheaves \smcat{\widehat{C}}.  It is indicated by a 5-tuple of classes:
   \[ \{\langle \mathcal{F_C}_0, \mathcal{F_C}_1,\mathcal{D}_0,\mathcal{D}_1,   	
	\mathcal{M}\rangle|\  \begin{cases}
                F\in_1  \mathcal{F_C}_0 \text{ iff $F$ is a presheaf on \smcat{C}} \\  
                \eta \in_1  \mathcal{F_C}_1 \text{ iff $\eta$ is a presheaf transform on \smcat{C}} \\
                  \text{ etc.}
                       \end{cases}\}\]
All these formulas are set theoretic.  Here  $\smcat{C}$ abbreviates a 5-tuple $\langle C_0,C_1,d_0,d_1,m\rangle$ of free variables of set type and conditions saying they form a small category, so the abstract indicates a variable presheaf category \smcat{\widehat{C}} depending on \smcat{C}.  We can also abstract over all these variables at once to form 
 \begin{multline*}\label{M:Yoneda} \{\langle  C_0,C_1,d_0,d_1,m,\mathcal{F_C}_0, \mathcal{F_C}_1,
                 \mathcal{D}_0,\mathcal{D}_1,   	
	\mathcal{M}\ \rangle\ |\\ \langle\mathcal{F_C}_0, \mathcal{F_C}_1,\mathcal{D}_0,
      \mathcal{D}_1, 	\mathcal{M}\rangle \text{ is the presheaf category on\ }
                  \langle  C_0,C_1,d_0,d_1,m\rangle  \}\end{multline*}
indicating the class of all pairs of a small category \smcat{C} and its presheaf category  \smcat{\widehat{C}}.

In MTT, for each small category \smcat{C}, the Yoneda operation $R_{(\_)}$ is an actual functor \arrow{R_{(\_)}}{\smcat{C}}{\smcat{\widehat{C}}} called the \emph{Yoneda embedding}.   Compare Section~\ref{SS:Yoneda}.

For any small site  $\langle \smcat{C},J\rangle$ MTT provides a category of sheaves called $\smcat{\widetilde{C}}_J$.   As a full subcategory of a presheaf category it is locally small.  The definition of the associated sheaf \arrow{i}{F}{\mathbf{a}F} in Section~\ref{SS:topologies} says sheafification \arrow{\mathbf{a}}{\smcat{\widehat{C}}}{\smcat{\widetilde{C}}_J} is left adjoint to the inclusion \clmarrow{\smcat{\widetilde{C}}_J}{\smcat{\widehat{C}}}.  Proofs in SGA~4~II and \citep[pp.~227ff.]{MacMoer}~work in MC and show sheafification preserves finite limits. 

A Grothendieck topos in MTT is any class category equivalent to $\smcat{\widetilde{C}}_J$ for some small site.   It is locally small since equivalence preserves the size of arrow sets.   MTT does not prove there is a collection of all Grothendieck toposes, since the definition of a Grothendieck topos quantifies over classes.  But MTT does prove there is a collection of all sheaf categories on small sites which we treat in Section~\ref{S:Morphism}.

\begin{theorem}\label{T:GRTopos}Every Grothendieck toposes in MTT is a model of elementary topos theory.  See for example \citep{JTop}.
\end{theorem}
\begin{proof} This is Theorem~\ref{T:Topos} plus the fact that each Grothendieck topos forms a class in MTT and that the axioms of elementary topos theory are isomorphism invariant.  
\end{proof}

\subsection{Cohomology in MTT}\label{S:CohomologyMTT}
A sheaf of modules over a sheaf of rings on any small site $\langle \smcat{C},J\rangle$ is just a module $M$ on a ring $R$ in the sheaf topos $\smcat{\widetilde{C}}_J$.  All commutative algebra that does not use excluded middle or the axiom of choice holds in every Grothendieck topos by Thm.~\ref{T:Topos}.     

For any ring $R$ in any sheaf topos $\smcat{\widetilde{C}}_J$,   MTT gives a  $\mathcal{U}$-category $\mathcal{MOD}_R$ of all $R$-modules.   The usual constructions of biproducts, kernels, and cokernels are bounded so they work in MC, so they show in MTT that  $\mathcal{MOD}_R$ is an Abelian category. 

Section~\ref{SS:injectives} defined cohomology groups $H^n(E,M)$ in MC.  In MTT we define cohomology functors \arrow{H^n}{\mathcal{MOD}_R}{\mathcal{AB}} from sheaves of modules to ordinary Abelian groups. The construction in  Section~\ref{SS:injectives} was explicit (not using choice) and set theoretic so MTT can express it by a class abstract.  

 MTT can give the usual definition of a universal $\delta$-functor~\citep[p.~204]{HartAG}.  Every  left exact functor \arrow{F}{\mathcal{MOD}_R}{\mathcal{AB}} has \emph{right derived functors}
  \[ F\cong R^0 F,\ R^1 F,\ \dots,\ R^n F,\ \dots\]
defined up to isomorphism either as a  universal $\delta$-functor over $F$, or as an effaceable $\delta$-functor over $F$.  See~\citep[p.~141]{Tohoku}.

The cohomology functors $H^i,i\leq n$ are derived functors of the global section functor  \arrow{\Gamma}{\mathcal{MOD}_R}{\mathcal{AB}} which takes each module to its group of global sections.

\section{Large-structure tools}\label{S:largestructure}

\subsection{Geometric morphisms}\label{S:Morphism}
A \emph{geometric morphism} of toposes is an adjoint pair of functors \adj{f^*}{f_*}{\smcat{E}}{\smcat{E}'} where the left adjoint \arrow{f^*}{\smcat{E}'}{\smcat{E}} is also left exact.   Then \arrow{f_*}{\smcat{E}}{\smcat{E}'} is called the \emph{direct image} functor, and $f^*$ the \emph{inverse image} functor.  The standard, published theory of geometric morphisms among Grothendieck toposes largely applies.  The objects and arrows of any Grothendieck topos are sets, and most standard constructions are all bounded.  For example, each Grothendieck topos \smcat{E} has a geometric morphism  \adj{\Delta}{\Gamma}{\smcat{E}}{\smcat{SET}} with the \emph{global section functor} $\Gamma$ taking each object $A\in \smcat{E}$ to the set of arrows \clarrow{1}{A}.  The usual argument shows this is up to equivalence the only geometric morphism from \smcat{E} to \smcat{SET}.

For other examples, MC proves any continuous function \arrow{f}{X}{X'} between topological spaces induces suitable operations on sheaves and their transforms on those spaces.  So MTT proves $f$ induces a geometric morphism \arrow{f^*,f_*}{Top(X)}{Top(X')} between the sheaf toposes, and given suitable separation conditions on the spaces every geometric morphism arises from a unique continuous function. See \citet[p.~348]{MacMoer}.  

Grothendieck toposes are defined in MTT but the definition quantifies over functors of class type, saying a category is a Grothendieck topos if there exists a functor equivalence beween it and some sheaf topos.  So MTT cannot prove there is a collection of all Grothendieck toposes.   It can prove there is a collection $\mathfrak{Top}_0$ of all sheaf toposes, and thus all Grothendieck toposes up to equivalence.   This has abstract
    \[\{ \langle \mathcal{S}_0,\mathcal{S}_1,d_0,d_1,m\rangle|\ \exists\text{ a small site }
               \langle \smcat{C},J\rangle\ 
        \begin{cases} X\in_1 \mathcal{S}_0 \leftrightarrow X \text{ is a sheaf on } \langle 
                                                     \smcat{C},J\rangle  \\
                 f\in_1 \mathcal{S}_1 \leftrightarrow f \text{ is a sheaf transform}  \\
                   etc.  
         \end{cases} \}\]
Similar abstracts give a collection  $\mathfrak{Top}_1$  of all geometric morphisms between sheaf toposes, and a collection of all natural transformations between these morphisms.  These form a 2-category $\mathfrak{Top}$ of Grothendieck toposes.  Cf.~\cite[p.~26]{JTop}. 

The standard theorems on $\mathfrak{Top}$ follow in MTT.  They make elementary use of classes, and quantify only over sheaves, transforms, and other sets.

\subsection{Sites}\label{S:Utopos}
A presheaf on a $\mathcal{U}$-category $\smcat{C}$ is a $\smcat{C}_0$ indexed class \arrow{\gamma_0}{F_0}{C_0} with action $e_{\mathcal{F}}$ analogously to Section~\ref{SS:Presheaves}.   A $\mathcal{U}$-presheaf or \emph{locally small presheaf} on \smcat{C} is a presheaf whose values are all sets, that is such that the restriction to any small subcategory $\smcat{C'}\subseteq \smcat{C}$ is small.

A $\mathcal{U}$-site, or \emph{locally small site}  $\langle \smcat{C},J\rangle$, is a site with locally small \smcat{C}.  A $\mathcal{U}$-sheaf is a locally small presheaf with the sheaf property.  Local smallness only quantifies over sets: every set of objects in a class category has a set of values.  So MTT can invoke local smallness in abstracts.  Thus every $\mathcal{U}$-site  $\langle \smcat{C},J\rangle$ has a class category  $\smcat{\widetilde{C}}_{J}$  of all  $\mathcal{U}$-sheaves.  A \emph{class topos} is any class category equivalent to  $\smcat{\widetilde{C}}_{J}$ for some $\mathcal{U}$-site.  For suitably bounded $\mathcal{U}$-sites, these are Grothendieck toposes:

\begin{theorem}[Comparison lemma]\label{Th:Compare} Let $\mathcal{U}$-site  $\langle \smcat{C}',J'\rangle$ have a full and faithful functor \arrow{u}{\smcat{C}}{\smcat{C}'} from a small category \smcat{C} where every object of \smcat{C'} has at least one $J'$-cover by objects $u(A)$ for objects $A$ of \smcat{C}.  Then $J'$ induces a topology $J$ on \smcat{C} making $\smcat{\widetilde{C}}_{J}$ and $\smcat{\widetilde{C}'}_{J'}$ equivalent categories.
\end{theorem}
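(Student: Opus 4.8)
The plan is to run the classical comparison argument (SGA\!~4~III.4.1) while checking that every construction stays bounded, so that it lives in MTT. First I would equip \smcat{C} with the topology $J$ \emph{induced} by $u$: a family $\{\arrow{f_i}{A_i}{A}\}$ is declared a $J$-cover precisely when the image family $\{\arrow{u(f_i)}{u(A_i)}{u(A)}\}$ generates a $J'$-cover of $u(A)$. Since $u$ is full and faithful this prescription is unambiguous, and the Grothendieck axioms (maximal sieve, stability under pullback, local character) are equations between arrows and membership conditions on subsets of $C_1$, hence $\Delta_0$ and provable in MC. By construction $u$ is continuous, so $F'\mapsto u^*F' = F'\circ u$ carries $J'$-sheaves to $J$-sheaves and defines a functor $\arrow{u^*}{\smcat{\widetilde{C'}}_{J'}}{\smcat{\widetilde{C}}_{J}}$. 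Restriction merely reindexes the indexed sets of sections of Section~\ref{SS:Presheaves}, so it preserves local smallness and is named by a class abstract in MTT. I claim $u^*$ is an equivalence, which gives the theorem.

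Next I would show $u^*$ is full and faithful. The covering hypothesis gives every object $X$ of \smcat{C'} a $J'$-cover $\{\clarrow{u(A_i)}{X}\}$ by image objects, and the sheaf axiom of Section~\ref{SS:topologies} then writes $F'(X)$ as an equalizer
\[ \xymatrix{ F'(X) \ar[r] & \prod_i F'(u(A_i)) \ar@<1ex>[r] \ar@<-1ex>[r] & \prod_{i,j} F'(\cpull{u(A_i)}{X}{u(A_j)}) } \]
whose outer terms are values of $u^*F'$. Hence a transformation $\clarrow{u^*F'}{u^*G'}$ determines its extension $\clarrow{F'}{G'}$ uniquely, and distinct transformations restrict to distinct ones. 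This is an equalizer computation bounded by the section sets of the sheaves, so it is carried out in MC.

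Essential surjectivity is the step I expect to be the real obstacle. Given a $J$-sheaf $G$ on \smcat{C} I must build a $J'$-sheaf $G'$ on \smcat{C'} with $u^*G'\cong G$. I would form the left Kan extension $\mathrm{Lan}_u G$ along $u$ as a colimit of representables --- using only the coproducts and coequalizers shown to exist in MC in Section~\ref{SS:Presheaves} --- and set $G' = \mathbf{a}(\mathrm{Lan}_u G)$ with the associated-sheaf functor of Section~\ref{SS:topologies}, both bounded. The classical content is that the unit $\clarrow{G}{u^*\mathrm{Lan}_u G}$ is an isomorphism because $u$ is full and faithful, and that sheafifying does not disturb it, since by hypothesis every object of \smcat{C'} is covered by image objects $u(A)$ on which nothing new is imposed. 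The genuinely foundational difficulty is orthogonal to this classical content: I must present $G'$ as a single $\smcat{C'}_0$-indexed class, built by one bounded formula, whose fibre over each $X$ is the relevant equalizer of products of the sets $G(A_i)$, with no appeal to replacement to assemble the fibrewise data into one sheaf. That each fibre is a set makes $G'$ a genuine $\mathcal{U}$-sheaf and keeps $\smcat{\widetilde{C'}}_{J'}$ locally small in the sense of Section~\ref{S:Utopos}. With $u^*$ full, faithful, and essentially surjective it is an equivalence $\smcat{\widetilde{C}}_{J}\simeq\smcat{\widetilde{C'}}_{J'}$ in MTT.
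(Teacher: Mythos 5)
Your overall architecture --- induced topology $J$, restriction functor $u^*$, then full, faithful, and essentially surjective --- is the same classical route (SGA\!~4 III.4.1, Mac~Lane--Moerdijk) that the paper's proof simply cites and declares bounded. But two of your steps have genuine gaps, and they sit exactly where the real content is.

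First, the full-faithfulness argument fails as written. In the equalizer
\[ F'(X) \rightarrow \prod_i F'(u(A_i)) \rightrightarrows \prod_{i,j} F'(\cpull{u(A_i)}{X}{u(A_j)}) \]
only the middle term consists of values of $u^*F'$\!: the pullbacks $\cpull{u(A_i)}{X}{u(A_j)}$ are formed in \smcat{C}'\! and are in general not of the form $u(B)$ (\smcat{C} need not have pullbacks, and $u$ need not preserve any it has), so a transformation \arrow{\phi}{u^*F'}{u^*G'} does not act on the right-hand term at all. Uniqueness of an extension \clarrow{F'}{G'} --- hence faithfulness --- does follow, since sheaves are separated; but existence, i.e.\ fullness, does not: given $s\in F'(X)$ you must show the family of elements $\phi_{A_i}(s|_{u(A_i)})$ is compatible, and for that you must cover each pullback by objects $u(B_k)$ (the hypothesis allows this), use fullness of $u$ to write the composites \clarrow{u(B_k)}{u(A_i)} as images of arrows \clarrow{B_k}{A_i} of \smcat{C}, apply naturality of $\phi$ to those arrows, and then use separatedness of $G'$ over the cover of the pullback. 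It is telling that your step never invokes fullness of $u$, which is the hypothesis carrying the theorem; this is precisely where it enters.

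Second, your essential-surjectivity step sets $G'=\mathbf{a}(\mathrm{Lan}_u G)$ ``with the associated-sheaf functor of Section~\ref{SS:topologies}'', but that section constructs $\mathbf{a}$ only on \emph{small} sites, while $\mathrm{Lan}_u G$ is a presheaf on the $\mathcal{U}$-site $\langle\smcat{C}'\!,J'\rangle$. Sheafification on a $\mathcal{U}$-site is not available in MTT as developed: the plus construction at an object $X$ of \smcat{C}'\! is a colimit indexed by the $J'$-covering sieves of $X$, which are subclasses of the class of all arrows to $X$; matching families for such sieves are themselves class-sized, so the naive construction lives at the level of collections and does not return a presheaf with set values. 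Avoiding exactly this kind of class-indexed colimit is what the comparison lemma is for, so invoking such a functor here is close to circular. The standard repair --- and what the cited sources actually do with the triple $u_!,u^*,u_*$ --- is to use the right Kan extension $u_*$ instead: $u_*G(X)$ is the set of matching families indexed by the comma category $(u\downarrow X)$, which is small because \smcat{C} is small and \smcat{C}'\! is locally small (granting, as the paper implicitly does and as holds in its application, Corollary~\ref{Cor:topos}, that the image of $u$ spans a small subcategory; without that, even your claim that $u^*$ preserves smallness needs an argument, since MC lacks replacement). Each value $u_*G(X)$ is then a bounded limit existing in MC, the covering hypothesis makes $u_*G$ a $J'$-sheaf, and full faithfulness of $u$ makes the counit $u^*u_*G\rightarrow G$ an isomorphism. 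This keeps every construction bounded and never sheafifies on \smcat{C}'\!.
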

\begin{proof}This is case i)$\Rightarrow$ii) of SGA~4 III.4.1 (p.~288).  Verdier's small categories are sets  for us, as are his functors $u_!,u^*,u_*$.  The constructions are bounded.  The proof by \citet[p.~588]{MacMoer} also adapts to MTT.
\end{proof}

\begin{corollary}\label{Cor:topos}Any $\mathcal{U}$-category \smcat{E} with a set of generators $\{G_i|i\in I\}\in \mathcal{U}$ and with every $\mathcal{U}$-sheaf for the canonical topology representable, is a Grothendieck topos.
\end{corollary}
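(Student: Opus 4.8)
The plan is to display \smcat{E} as the category of $\mathcal{U}$-sheaves for its own canonical topology, and then to replace that large site by a small one using Theorem~\ref{Th:Compare}, with the set of generators supplying exactly the covers the comparison lemma requires.

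First I would observe that the Yoneda embedding $R_{(\_)}$ of Section~\ref{SS:Yoneda}, read in MTT as a functor \arrow{R_{(\_)}}{\mathcal{E}}{\smcat{\widehat{E}}} into the class category of $\mathcal{U}$-presheaves, is full and faithful and factors through the class category $\smcat{\widetilde{E}}_{J_{\mathrm{can}}}$ of $\mathcal{U}$-sheaves for the canonical topology $J_{\mathrm{can}}$. Representables are sheaves for $J_{\mathrm{can}}$ by the very definition of the canonical topology as the finest one for which every representable is a sheaf. The hypothesis that every $\mathcal{U}$-sheaf is representable says precisely that this full and faithful $R_{(\_)}$ is essentially surjective, so it is an equivalence $\mathcal{E}\simeq\smcat{\widetilde{E}}_{J_{\mathrm{can}}}$. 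Thus \smcat{E} is a class topos and, in particular, has the coproducts, coequalizers and exactness of a sheaf category.

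Next I would pass to the full subcategory $\smcat{C}\subseteq\mathcal{E}$ on the generators $\{G_i\mid i\in I\}$. Since $I\in\mathcal{U}$ is a set and \smcat{E} is a $\mathcal{U}$-category, each $\mathrm{Hom}_{\mathcal{E}}(G_i,G_j)$ is a set and the $\cprod{I}{I}$-indexed family of these hom-sets has a set union in MC; so $\smcat{C}_0$ and $\smcat{C}_1$ are sets and \smcat{C} is genuinely small. The inclusion \arrow{u}{\smcat{C}}{\mathcal{E}} is full and faithful by construction, which is one hypothesis of Theorem~\ref{Th:Compare} taken with $\smcat{C}'=\mathcal{E}$ under $J_{\mathrm{can}}$.

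The remaining hypothesis of the comparison lemma --- that every object $E$ of \smcat{E} has a $J_{\mathrm{can}}$-cover by objects $u(G_i)$ --- is where the generator assumption does its work, and is the step I expect to be the main obstacle. I would show that the family of all arrows $\{g\colon G_i\to E\mid i\in I,\ g\in\mathrm{Hom}_{\mathcal{E}}(G_i,E)\}$ is a covering family for $J_{\mathrm{can}}$, i.e.\ a universal effective epimorphic family. Here I can exploit that $\mathcal{E}\simeq\smcat{\widetilde{E}}_{J_{\mathrm{can}}}$ is already a topos: it has the coproducts and coequalizers needed to write $E$ as a colimit of generators, and the jointly generating property then upgrades this to the epimorphic-family statement, with stability under pullback automatic in a topos. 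With both hypotheses verified, Theorem~\ref{Th:Compare} yields an induced topology $J$ on the small category \smcat{C} together with an equivalence $\smcat{\widetilde{C}}_{J}\simeq\smcat{\widetilde{E}}_{J_{\mathrm{can}}}\simeq\mathcal{E}$. Since $\langle\smcat{C},J\rangle$ is a small site, \smcat{E} is by definition a Grothendieck topos.
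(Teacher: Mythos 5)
Your skeleton is exactly the paper's: use representability to identify \smcat{E} with the category of canonical $\mathcal{U}$-sheaves via the Yoneda embedding, then apply Theorem~\ref{Th:Compare} with $\smcat{C}'=\smcat{E}$ carrying the canonical topology and \smcat{C} the full subcategory on the generators, concluding that \smcat{E} is equivalent to sheaves on a small site. Your smallness argument for \smcat{C} is also fine, and in fact is immediate from the paper's definition of $\mathcal{U}$-category (the set $\{G_i\,|\,i\in I\}$ of objects has a \emph{set} of all arrows between them; no union over hom-sets is needed). You also correctly identify the one place where real work is required: the comparison lemma's hypothesis that every object of \smcat{E} has a canonical cover by objects of \smcat{C}.

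But your verification of that hypothesis is circular. You justify it by declaring that $\smcat{E}\simeq\smcat{\widetilde{E}}_{J_{\mathrm{can}}}$ ``is already a topos: it has the coproducts and coequalizers needed to write $E$ as a colimit of generators,'' with stability of epimorphic families automatic. Nothing available at that point gives you those properties. The exactness and cocompleteness of sheaf categories in this development (Theorem~\ref{Th:properties}, and Theorem~\ref{T:Topos} via it) are established only for sheaves on \emph{small} sites, whereas $\langle\smcat{E},J_{\mathrm{can}}\rangle$ is a large $\mathcal{U}$-site; categories of sheaves on large sites are not known to have colimits at all, since sheafification there would require a colimit over the proper-class-sized family of covering sieves, and this failure is precisely why Theorem~\ref{Th:Compare} and this corollary exist --- their whole point is to show that \emph{certain} class toposes reduce to small sites and only thereby inherit the good properties. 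So you are assuming the conclusion (topos exactness for \smcat{E}) in order to verify the hypothesis of the theorem that is supposed to deliver it. The textbook fact you are reaching for --- a jointly epimorphic family is universally effective-epimorphic, hence a canonical cover --- is proved \emph{in a topos}, i.e.\ under exactly the exactness you do not yet have; joint faithfulness of the $G_i$ plus representability of canonical sheaves does not hand it to you by this route. A non-circular proof must obtain the covering condition some other way: either by reading the generating hypothesis topologically (every object admits at least one canonical cover by the $G_i$), which is how the sources the paper points to set up this characterization, or by a direct argument that a compatible family on the sieve generated by the $G_i$ actually glues --- and that gluing, not the equivalence $\smcat{E}\simeq\smcat{\widetilde{E}}_{J_{\mathrm{can}}}$, is the substantive content of the corollary.
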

\begin{proof}See the canonical topology in any topos theory text.  The representability assumption says \smcat{E} is equivalent to the category of canonical $\mathcal{U}$-sheaves.  Apply the theorem to \smcat{C'=E} and \smcat{C} the full subcategory of objects in $G$.
\end{proof}

\begin{theorem}\label{Th:properties} For any small site $\langle \smcat{C},J\rangle$ the sheaf topos $\smcat{\widetilde{C}}_{J}$ has:
\begin{itemize}
\item [a)] a limit for every finite diagram.
\item [b)] a coproduct for each set of sheaves, and these are stable disjoint unions.
\item [c)] a stable quotient for every equivalence relation.
\item [d)] a set $\{G_i|i\in I\}$ of generators.
\end{itemize}
\end{theorem}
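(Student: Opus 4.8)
The plan is to verify Giraud's exactness conditions for $\smcat{\widetilde{C}}_J$ by computing limits directly in presheaves and colimits by sheafifying the corresponding presheaf colimit. Two facts established earlier do the real work. First, presheaf categories have the relevant (co)limits computed pointwise in sets: finite limits and coequalizers (Section~\ref{SS:Presheaves}, citing \cite{CfWM}) and set-indexed coproducts $\coprod F$ (Section~\ref{SS:Presheaves}). Second, sheafification \arrow{\mathbf{a}}{\smcat{\widehat{C}}}{\smcat{\widetilde{C}}_J} is a left adjoint to the inclusion \clmarrow{\smcat{\widetilde{C}}_J}{\smcat{\widehat{C}}} and preserves finite limits, as noted above (SGA~4~II and \cite{MacMoer}, both working in MC). Since sets themselves satisfy all the Giraud conditions pointwise, the task reduces to transporting those conditions through $\mathbf{a}$, keeping every construction bounded.

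For part a), the inclusion \clmarrow{\smcat{\widetilde{C}}_J}{\smcat{\widehat{C}}} is a right adjoint and so preserves limits; since the sheaf property is itself an equalizer condition (Section~\ref{SS:topologies}), a finite limit of sheaves formed pointwise in presheaves is again a sheaf. Thus finite limits in $\smcat{\widetilde{C}}_J$ exist and coincide with presheaf limits, a construction bounded by the value sets of the diagram and hence valid in MC. For parts b) and c), I would form the coproduct $\coprod F$ of a set-indexed family, or the coequalizer of an equivalence relation \arrow{\eta,\iota}{R}{M}, in presheaves and then apply $\mathbf{a}$; because $\mathbf{a}$ is a left adjoint this yields the colimit in $\smcat{\widetilde{C}}_J$. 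Pointwise in sets, coproducts are disjoint and stable and equivalence relations are effective with stable quotients, so the presheaf colimits have these properties levelwise. Left-exactness of $\mathbf{a}$ is exactly what carries them over: disjointness, effectivity, and stability are all expressed by pullback diagrams, and $\mathbf{a}$ preserves these finite limits, so each statement about the sheafified colimit follows from the corresponding presheaf statement.

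For part d), the sheafified representables $\mathbf{a}R_B$, one for each object $B\in C_0$, form the generating set. The family is indexed by the set $C_0$, so it is a set. Representables generate presheaves (Section~\ref{SS:Yoneda}), and applying the left adjoint $\mathbf{a}$ preserves this generating property, so the $\mathbf{a}R_B$ generate $\smcat{\widetilde{C}}_J$.

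The main obstacle is parts b) and c): the existence of the colimits is routine, but colimits of sheaves are \emph{not} computed pointwise, so disjointness, effectivity, and stability cannot simply be read off from the presheaf level. They must be recovered by feeding the pullback diagrams that express them through the left-exactness of $\mathbf{a}$, and the care lies in confirming that each such diagram is a genuine finite limit that $\mathbf{a}$ preserves. Throughout, I would check that the constructions stay bounded — each is controlled by the value sets $F(A)$ of the sheaves involved together with a few of their powersets — so that the entire argument remains inside MC as required.
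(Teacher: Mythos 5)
Your proposal is correct and follows essentially the same route as the paper: establish the properties for the presheaf category $\smcat{\widehat{C}}$ (Section~\ref{SS:Presheaves}) and transfer them to $\smcat{\widetilde{C}}_J$ via the left exact sheafification left adjoint $\mathbf{a}$ of Section~\ref{SS:topologies}, which is exactly what the paper does, deferring the details you spell out (pointwise limits, sheafified colimits, pullback expressions of disjointness and stability, sheafified representables as generators) to SGA~4 and \citet{MacMoer}.
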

\begin{proof}   Section~\ref{SS:Presheaves} proved most of this for presheaf categories.  The sheaf case follows from sheafification described in Section~\ref{SS:topologies}.  See SGA~4 II.4 (p.~235) and SGA~4 IV.1.1.2 (p.~302); or see \citet[pp.~24ff.]{MacMoer}.
\end{proof}

In fact $\smcat{\widetilde{C}}_{J}$ has limits for every small diagram, but Theorem~\ref{Th:Giraud} below refers to this list as given.  The list amounts to saying \smcat{\widetilde{C}} is an elementary topos with small coproducts and a small generator~\citep[pp.~591]{MacMoer}.

\begin{theorem}[Giraud theorem]\label{Th:Giraud} Any $\mathcal{U}$-category \smcat{E} with the properties listed in Theorem~\ref{Th:properties} is a Grothendieck topos. 
\end{theorem}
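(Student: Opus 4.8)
The plan is to reduce the theorem to Corollary~\ref{Cor:topos}, which already delivers a Grothendieck topos from a $\mathcal{U}$-category with a set of generators once one knows that every $\mathcal{U}$-sheaf for the canonical topology is representable. Property~(d) supplies the generators $\{G_i\,|\,i\in I\}$, so the whole burden falls on this representability statement, which is the genuine content of Giraud's theorem. First I would equip \smcat{E} with its canonical topology $J_{\mathrm{can}}$, the finest topology for which every representable presheaf is a sheaf, whose covers are the universally effective epimorphic families. The Yoneda embedding \arrow{R_{(\_)}}{\smcat{E}}{\smcat{\widetilde{E}}_{J_{\mathrm{can}}}} is full and faithful by the Yoneda lemma of Section~\ref{SS:Yoneda} and lands among $\mathcal{U}$-sheaves by the very definition of $J_{\mathrm{can}}$; the theorem then amounts to showing it is essentially surjective, i.e.\ that every canonical $\mathcal{U}$-sheaf is representable.

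Second, given a canonical sheaf $F$, I would build a representing object by a presentation through generators. Local smallness makes each value $F(G_i)$ a set, so the maps \clarrow{R_{G_i}}{F} corresponding by Yoneda to elements of $F(G_i)$ are indexed by a set; since the $G_i$ generate and $F$ is a canonical sheaf this family is epimorphic. Using~(b) I would form in \smcat{E} the set-indexed coproduct $P=\coprod G_i$ of the relevant copies of the generators; stability and disjointness of coproducts, together with the fact that $J_{\mathrm{can}}$-covers are effective epimorphic, identify $R_P$ with the sheaf coproduct $\coprod R_{G_i}$ and make the induced \cleparrow{R_P}{F} an epimorphism of sheaves. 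Taking its kernel pair gives an equivalence relation $W\rightrightarrows R_P$ in the sheaf topos; covering $W$ by representables in the same manner and using the finite limits of~(a) realizes this relation as the representable of an equivalence relation $Q\rightrightarrows P$ in \smcat{E}. Property~(c) then produces the quotient $P/Q$ in \smcat{E}.

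Third, I would check $R_{P/Q}\cong F$. Because equivalence relations in \smcat{E} are effective and their quotients stable, and because $J_{\mathrm{can}}$-covers are exactly the effective epimorphic families, the Yoneda embedding carries the coequalizer $Q\rightrightarrows P\to P/Q$ to a coequalizer in \smcat{\widetilde{E}}_{J_{\mathrm{can}}}; this coequalizer is forced to coincide with the original presentation of $F$ as the quotient of $R_P$ by its kernel pair, so $F\cong R_{P/Q}$ is representable. With representability in hand, Corollary~\ref{Cor:topos} finishes the proof.

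Finally, to keep everything inside MTT I would carry out the reconstruction over the small full subcategory \smcat{C} of \smcat{E} spanned by the generators, which is small by local smallness, and invoke the Comparison lemma (Theorem~\ref{Th:Compare}) to identify \smcat{\widetilde{E}}_{J_{\mathrm{can}}} with the genuine small-site topos \smcat{\widetilde{C}}_{J}; all coproducts used are set-indexed, all limits invoked are finite, and the defining conditions are set theoretic, hence expressible by the abstracts of Section~\ref{S:MTT}, while elementary topos reasoning is available by Theorem~\ref{T:Topos}. The hard part will be the exactness bookkeeping of the second and third steps: verifying that the kernel pair $W\rightrightarrows R_P$ is genuinely representable by an equivalence relation in \smcat{E}, and that Yoneda preserves the coproduct-then-quotient colimit presenting $F$. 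This is precisely the place where axioms~(b) and~(c), with their stability clauses, must be shown to be exactly strong enough; the set-theoretic side is routine once bounded by the generator subcategory.
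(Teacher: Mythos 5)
Your plan is sound, but it is not the paper's proof: it is the SGA\!~4 route, while the paper follows Mac~Lane and Moerdijk. The paper never forms the canonical site on \smcat{E} at all: it takes the small full subcategory \smcat{C} on the set of generators (small because \smcat{E} is locally small), gives it the topology whose covers are the epimorphic families of \smcat{E}, and casts the adjunction \adj{(\mathrm{Hom}_{\smcat{E}})}{(\_ \otimes_{\smcat{C}}A)}{\smcat{SET}^{\smcat{C}^{op}}}{\smcat{E}} as class functors in MTT, outsourcing all exactness arguments to \citet[pp.~578ff.]{MacMoer}. You instead prove representability of every canonical $\mathcal{U}$-sheaf and quote Corollary~\ref{Cor:topos}; that is essentially SGA\!~4 IV.1.2, and it has the merit of making the paper's Corollary~\ref{Cor:topos} and Comparison Lemma (Theorem~\ref{Th:Compare}) do real work rather than leaving Theorem~\ref{Th:Giraud} as a free-standing citation. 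Two warnings about where the cost actually sits. First, the lemma you use silently --- that the family of all maps from generators to an object, and more generally any epimorphic family in \smcat{E}, is universally effective-epimorphic, hence a canonical cover --- is a substantive consequence of properties (a)--(c); it is needed both for the epimorphy of $\coprod_i R_{G_i}\rightarrow F$ and for the cover hypothesis of Theorem~\ref{Th:Compare} inside Corollary~\ref{Cor:topos}, and your sketch nowhere derives it. Second, realizing the kernel pair $W\rightrightarrows R_P$ as $R_Q$ for an equivalence relation $Q\rightrightarrows P$ in \smcat{E} cannot be done with ``the finite limits of~(a)'' alone: $W$ is a subsheaf of $R_{\cprod{P}{P}}$, so one must cover it by representables, form the image of the resulting map $S\rightarrow \cprod{P}{P}$ in \smcat{E} using the quotients of~(c), and check that Yoneda carries this image factorization to the sheaf-theoretic one (which again invokes the first lemma). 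You correctly flag this bookkeeping as the hard part; it is exactly the content of the Mac~Lane--Moerdijk pages the paper cites, which is why the paper's own proof can be three sentences long. Set-theoretically your route costs nothing extra, since Corollary~\ref{Cor:topos} already commits MTT to canonical $\mathcal{U}$-sheaves on a $\mathcal{U}$-category, and your reduction to the small generator subcategory is the same smallness observation the paper makes.
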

\begin{proof}  The proof by \citet[pp.~578ff.]{MacMoer}\@ is easily cast in MTT.  As they do, define \smcat{C} to be the full subcategory of \smcat{E} on the set of generators.  It is small since \smcat{E} is locally small.  Take their functors \adj{(\mathrm{Hom}_{\smcat{E}})}{(\_ \otimes_{\smcat{C}}A)}{\smcat{SET}^{\smcat{C}^{op}}}{\smcat{E}} as class functors between class categories.
\end{proof}
\begin{corollary}Every Grothendieck topos is equivalent to some sheaf topos  on a subcanonical site with all finite limits.
\end{corollary}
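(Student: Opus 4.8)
The plan is to realize the given Grothendieck topos \smcat{E} as the category of canonical sheaves on a small, finite-limit-closed full subcategory of itself, and then to collapse that description onto a small site by the comparison lemma. First I would record what being a Grothendieck topos gives: by Theorem~\ref{Th:properties} and the Giraud theorem (Theorem~\ref{Th:Giraud}) \smcat{E} is a $\mathcal{U}$-category that has all finite limits and a set $\{G_i\mid i\in I\}\in\mathcal{U}$ of generators. I would then fix a full subcategory \smcat{C} of \smcat{E} that contains every $G_i$ and is closed under the finite limits computed in \smcat{E}; then \smcat{C} has all finite limits on the nose and the inclusion \arrow{u}{\smcat{C}}{\smcat{E}} is full, faithful and finite-limit-preserving.

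Next I would equip \smcat{E} with its canonical topology $J_{\mathrm{can}}$, the finest topology for which every representable is a $\mathcal{U}$-sheaf, and let $J$ be the topology it induces on \smcat{C}. Since \smcat{E} is a topos, the sheafified Yoneda map $R_{(\_)}\colon\smcat{E}\to\smcat{\widetilde{E}}_{J_{\mathrm{can}}}$ is an equivalence — this is the converse reading of Corollary~\ref{Cor:topos}, namely that in a Grothendieck topos every canonical $\mathcal{U}$-sheaf is representable — so $\langle\smcat{E},J_{\mathrm{can}}\rangle$ is a legitimate $\mathcal{U}$-site. Restricting canonical covers along $u$ shows each representable $R_B$ for $B$ in \smcat{C} is a $J$-sheaf, i.e.\ $\langle\smcat{C},J\rangle$ is subcanonical, and the defining property of the generators supplies, for every object of \smcat{E}, the epimorphic family of all maps out of the $G_i$, which is a cover for $J_{\mathrm{can}}$ by objects $u(A)$.

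These are exactly the hypotheses of the comparison lemma (Theorem~\ref{Th:Compare}), taking $\langle\smcat{E},J_{\mathrm{can}}\rangle$ in the role of $\langle\smcat{C}',J'\rangle$ and \arrow{u}{\smcat{C}}{\smcat{E}} the inclusion. The lemma yields an equivalence $\smcat{\widetilde{C}}_{J}\simeq\smcat{\widetilde{E}}_{J_{\mathrm{can}}}$, and composing with the equivalence $\smcat{E}\simeq\smcat{\widetilde{E}}_{J_{\mathrm{can}}}$ above exhibits \smcat{E} as equivalent to the sheaf topos on the small site $\langle\smcat{C},J\rangle$, which is subcanonical and has all finite limits — the assertion.

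The main obstacle is the single point where MC bites, namely verifying that the finite-limit closure \smcat{C} is a \emph{set} rather than merely a class. Forming it by iterating pointwise finite limits and sheafification lets the underlying indexed-set data of successive stages climb through higher powersets, so a naive $\omega$-indexed union is precisely the unbounded, iterated-powerset construction that MC need not admit. I would sidestep this by not closing \smcat{E} up directly but building \smcat{C} from a small site $\langle\smcat{B},K\rangle$ presenting $\smcat{E}\simeq\smcat{\widetilde{B}}_{K}$: the finite-limit (lex) completion of \smcat{B} is a bounded construction, its objects being finite diagrams in \smcat{B} and its arrows $\Delta_0$-definable compatible families over the sets $B_0,B_1$, hence small in MC; transporting $K$ along the sheafified Yoneda embedding of this completion into $\smcat{\widetilde{B}}_{K}$ both identifies its sheaf topos with \smcat{E} and supplies the required finite limits, keeping the whole argument inside the bounded, finite-order-arithmetic discipline of the paper.
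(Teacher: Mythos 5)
Your overall skeleton --- Giraud's theorem to supply generators and finite limits, the canonical topology, and Theorem~\ref{Th:Compare} applied to a small finite-limit-closed full subcategory --- is exactly the paper's route (which defers the categorical work to Mac~Lane--Moerdijk), and you correctly isolate the one point where MC bites: showing the finite-limit closure is a \emph{set}. But your sidestep through the free lex completion is where the argument genuinely fails. Theorem~\ref{Th:Compare} requires $u$ to be full and faithful, and the finite-limit-preserving extension \arrow{u}{\mathrm{Lex}(\smcat{B})}{\smcat{E}} of the sheafified Yoneda embedding is in general neither, because the free completion adds formal limits with no regard for relations that already hold in \smcat{E}. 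Concretely, let \smcat{B} be the one-object category with only its identity arrow and $K$ the trivial topology, so $\smcat{E}\simeq\smcat{SET}$. Then $\mathrm{Lex}(\smcat{B})\simeq \cat{FinSet}^{\mathrm{op}}$ and $u$ sends every object to the terminal set, so it is neither faithful (all four endomorphisms of the formal object $2$ go to $\mathrm{id}_1$) nor full. Nor can any transported topology rescue this: on a subcanonical site the Yoneda embedding lands in sheaves and is full and faithful, so any subcanonical $J$ making the $J$-sheaf topos on $\mathrm{Lex}(\smcat{B})$ equivalent to \smcat{E} would exhibit $\mathrm{Lex}(\smcat{B})$ as a full subcategory of \smcat{E}. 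In the example this is impossible: sets $X_S$, one per finite set $S$, with $\mathrm{Hom}_{\smcat{SET}}(X_S,X_T)\cong \mathrm{Hom}_{\cat{FinSet}}(T,S)$ would satisfy $|X_T|^{|X_S|}=|S|^{|T|}$; taking $|S|=|T|=2$ forces $|X_S|=2$, and then $|S|=2$, $|T|=1$ demands $|X_T|^{2}=2$. So the site you end with need not be subcanonical --- indeed it may admit no subcanonical presentation of \smcat{E} at all. (A smaller gap: the ``converse reading'' of Corollary~\ref{Cor:topos}, that every canonical $\mathcal{U}$-sheaf on a Grothendieck topos is representable, is true but nowhere proved in the paper.)

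The repair is to realize the formal limits rather than keep them free: replace $\mathrm{Lex}(\smcat{B})$ by its full image in \smcat{E} --- equivalently, a full subcategory of \smcat{E}, or of $\widehat{\smcat{C}}$ as the paper does, containing the generators (representables) and closed under finite limits --- since full faithfulness, hence subcanonicity, is restored only for genuine full subcategories. But then, in MC, you must choose one realized limit per finite diagram uniformly and without replacement, and here your worry about ``climbing through higher powersets'' is unfounded for precisely the reason the paper exploits: a finite limit of finite limits is again a finite limit, and every finite limit is an equalizer sitting inside a finite product computed pointwise, so no unbounded iteration occurs. Everything reduces to showing each set of sets has a \emph{set} of all finite products, hence to a set of all finite powers $A^n$ of a single set, which the paper codes as partial functions \clpararrow{\mathbb{N}}{A} defined on $\{0,\dots,n-1\}$. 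That bounded coding is the actual mathematical content of the paper's proof; your set of finite diagrams in \smcat{B} supplies an index set for the closure, but not the full subcategory of \smcat{E} that the corollary needs.
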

\begin{proof}  After \citet[pp.~578ff.]{MacMoer}, it remains to prove in MTT that every small category \smcat{C} has a small full subcategory $\smcat{C'}\subseteq \widehat{\smcat{C}}$ of presheaves containing the representables and closed under finite limits.  Since $\widehat{\smcat{C}}$ is locally small it suffices to find a set of presheaves including the representables and closed under finite limits.  Limits of presheaves are computed pointwise \citep[p.~116]{CfWM}, and a product of equalizers is an equalizer.  So we must show for each set of sets there is a set of all finite products of those sets, which follows if we know for each single set $A$ there is a set of all finite powers $A^n$.  To prove that, code an $n$-tuple of elements of $A$ as a partial function \clarrow{\mathbb{N}}{A} defined for $0\leq i < n$. 
\end{proof}

Where SGA~4 invokes two universes $U\in V$ the larger is always just a shorthand for dealing with definable subclasses of $U$ as we can do in MTT\@.  See e.g.~the Giraud theorem (IV.1.2) and sheaf multilinear algebra (IV.10).

\subsection{Duality and derived categories}\label{SS:derivedcats} 
\begin{quotation}The chief ideas of [Grothendieck duality] were known to me since 1959, but the lack of adequate foundations for homological algebra  prevented me attempting a comprehensive  revision.  This gap in foundations is about to be filled by Verdier's dissertation, making a satisfactory presentation possible in principle. (Grothendieck quoted by \citealp[p.~III]{HartResidues})
\end{quotation}

\citet{GroDuality} finds his duality theorem too limited.  It was essentially as in \citet{AltmanKleiman}: certain cohomology groups (and related groups) of nonsingular projective schemes are isomorphic in a natural way.  The proof invokes proper class categories but really only quantifies over sheaves and modules.  It can be given in MC.  \citet[p.~486]{WilesFerm} calls it ``explicit duality over fields.'' 

\citet[pp.~112--15]{GroCoh} explains why duality should reach farther.  By 1959 he believed the most unified and general tool is \emph{derived categories}, now standard for Grothendieck duality.  ``Miraculously, the same formalism applies in \'etale cohomology, with quite different proofs''~\cite[p.~17]{DelQuel}.  Deligne uses them for \'etale Poincar\'e duality in SGA~4~XVII,~XVIII and \citep{DelDepart}.

Cohomology takes a module $M$ on a scheme $X$ and deletes nearly all its structure, highlighting just a little of it in the groups $H^n(X,M)$.  The \emph{derived category} $D(X)$ of modules on $X$ deletes much of the same information but not all.  Some manipulations work at this level which are obscured by excess detail at the level of modules and are impossible for lack of detail at the level of  cohomology.

A scheme map \arrow{f}{X}{Y} sets up complicated relations between cohomology over $X$ and $Y$.  The successive effect on cohomology of $f$ and a further \arrow{g}{Y}{Z} is not fully determined by the separate effects of $f$ and $g$ (those determine it only up to a spectral sequence).  A functor \arrow{Rf_*}{D(X)}{D(Y)} between derived categories approximates the effect of $f$ on cohomology so that the approximation of successive effects is precisely the composite of the approximations:   
  \[ \xymatrix@R=1pc{ & Y \ar[dr]^g  &&& D(Y) \ar[dr]^{Rg_*} \\ X \ar[ur]^f \ar[rr]_{gf} && Z 
	& D(X) \ar[ur]^{Rf_*}    \ar[rr]_{R(gf)_* \cong \ Rg_*Rf_*} && DZ) }\]
 
All variants of Grothendieck duality being developed today say the functor $Rf_*$ has a right adjoint $Rf^!$, with further properties under some conditions on $f$.  The adjunction contains very much information.

The set theoretic issue is to form certain \emph{categories of fractions}. In any small or class category \smcat{C} each suitable class $\Sigma$ of arrows has a category of fractions $\smcat{C}[\Sigma^{-1}]$ inverting each arrow in $\Sigma$.  It has the same objects, while an arrow \clarrow{A}{B} in $\smcat{C}[\Sigma^{-1}]$ is represented by a pair of arrows in \smcat{C}:
   \[\xymatrix{ A  & C \ar[l]_s \ar[rr]^f && B &  s\in\Sigma}\]  
We define an equivalence relation on these pairs, and a composition rule so a pair $\langle s,f\rangle$ acts like a composite \arrow{fs^{-1}}{A}{B} even if $s$ has no inverse in \smcat{C}. 

The derived category $D(X)$ starts with the category $\smcat{K}(X)$ whose objects are complexes of quasi-coherent sheaves of modules over a scheme $X$
   \[ \xymatrix{ \cdots \ar[r] & M_{i-1} \ar[r] &M_i \ar[r] &M_{i+1} \ar[r] & M_{i+2} \ar[r] & \cdots}\]
and arrows are \emph{homotopy classes} of maps between complexes.  Quasi-coherent sheaves are those closest to the geometry of a scheme, but this sets no bound on cardinality and does not affect the set theory involved.  Complexes and homotopy classes are sets, provably existing in MC.\@ The derived category $D(X)$ is a certain calculus of fractions on $\mathcal{K}(X)$ \citep[pp.~678ff.]{EisComm}.  

\citet[p.~386]{WeibelIntro} cuts the classes of fractions down to sets for many important cases including modules on schemes.  But he uses countable replacement so that sequences of cardinals have suprema.  MTT avoids replacement and these sequences of cardinals and does not limit the cases.

Here the class category is $\smcat{K}(X)$ and $\Sigma$ is the class of \emph{quasi-isomorphisms}, the homotopy classes inducing isomorphisms in all degrees of cohomology.   For fixed $A,B$ the relevant pairs are
   \[\xymatrix{ A  & C \ar[l]_s \ar[rr]^f && B &  s\text{ any quasi-isomorphism}}\]  
Each single equivalence class in $\smcat{C}[\Sigma^{-1}]$ involves a proper class of pairs with different $C$.  The collection $D(X)_1$ of arrows of $D(X)$  is the collection of these equivalence classes, while the class of objects is the class $D(X)_0=\smcat{K}(X)_0$ of complexes.

The key point conceptually and for MTT is that the definition of $D(X)_1$ depends on (infinitely many) complexes of modules making (infinitely many) finite diagrams commute.  It is expressed by a set theoretic abstract.   The graphs of domain, codomain, and composition are similar.  MTT proves there is a derived category $D(X)$, with a class of objects and collection of arrows.  

So current work on Grothendieck duality is formalizable in MTT.  For debate over mathematical strategies (not foundations) see \citet[preface]{ConradDuality}, Lipman in \citep[pp.~7--9]{LipmanHashimoto}, and \citet[pp.~294--300]{NeemanDerive}.  \citet[pp.~1--13]{HartResidues} describes an ``ideal form'' of the theorem and suggests ``Perhaps some day this type of construction will be done more elegantly using the language of fibred categories and results of Giraud's thesis''~(p.~16).

\subsection{Fibred categories}
Universes first appeared in print in SGA~1~VI on fibred categories. They are a way to treat a class or category of categories as a single category.  So SGA~4~VI calculates limits of families of Grothendieck toposes by using fibred toposes.  In much of SGA~4 fibred toposes are presented by fibred sites.  The logical issues are essentially the same as in Section~\ref{S:Utopos}.  Many applications can be cast in MC in terms of sites, while the general facts are clearer and more concise in MTT using toposes and fibred families of them.   The latter requires no stronger logical foundation than the former.

\section{A proof of Fermat's Last Theorem in PA?}\label{S:Fermat} 
We have founded the whole SGA for arbitrary sites, while individual proofs in number theory use only low degree cohomology of sites close to arithmetic.  Detailed bounds may suffice to get existing proofs into $n$-order arithmetic for relatively low $n$, as in Section~\ref{S:Third}.  That might be a good context for such hard logical analysis as \citet{MacintyreImpact} begins for FLT\@.  More work might bound the constructions within a conservative extension of PA~\citep{TakeutiConserv} to show some existing proof of FLT works essentially in PA.  It might help further reduce the proof to Exponential Function Arithmetic (EFA) as conjectured in \citep{FriedmanConcrete}.  Such estimates are likely to be difficult.  This is no logical end run around serious arithmetic.

Not motivated by concern with logic, \citet{KisinModuli} extends and simplifies~\citep{WilesFerm}, generally using geometry less than commutative algebra, visibly reducing the demands on set theory.  And \citet{KisinBarsotti} completes a different proof of FLT by a strategy of Serre advanced by Khare and Wintenberger.

\section*{Acknowledgments}
It is a pleasure to thank people who contributed ideas to this work, which does not mean any of them shares any given viewpoint here.  I thank especially Jeremy Avigad, Steve Awodey, John Baldwin, Brian Conrad, Walter Dean, Pierre Deligne, Fran\c cois  Dorais, Adam Epstein, Thomas Forster, Harvey Friedman, Sy David Friedman, Steve Gubkin, Michael Harris, Wiliam Lawvere, Angus Macintyre, Barry Mazur, Michael Rathjen, Michael Shulman, Jean-Pierre Serre, and Robert Solovay.  
\vspace{5ex}

\bibliographystyle{apalike}

\end{document}